\crefname{equation}{}{}
\crefname{lem}{Lemma}{Lemmas}
\crefname{thm}{Theorem}{Theorems}
\crefname{assum}{Assumption}{Assumptions}
\theoremstyle{plain}
\newtheorem{theorem}{Theorem}[section]
\newtheorem{lemma}[theorem]{Lemma}
\newtheorem{proposition}[theorem]{Proposition}
\theoremstyle{definition}
\newtheorem{definition}[theorem]{Definition}
\theoremstyle{remark}
\newtheorem{remark}{Remark}
\newtheorem{assum}{Assumption}
\begin{document}


\title{Barzilai-Borwein Diagonal Quasi-Newton Method for Nonconvex Multiobjective Optimization Problems}
\author{
\name{Hua Liu\textsuperscript{a}  Zhuoxin Fan\textsuperscript{a} \thanks{CONTACT Liping Tang. Email: \href{mailto:tanglipings@163.com}{tanglipings@163.com}} Liping Tang\textsuperscript{a} and Xinmin Yang\textsuperscript{a}}
\affil{\textsuperscript{a}National Center for Applied Mathematics in Chongqing, Chongqing Normal University, Chongqing, China}
}

\maketitle
\let\thefootnote\relax\footnotetext{\textcolor{red}{
	This article was submitted to \textit{Optimization} on May 29, 2026.}}
\begin{abstract}
Currently, quasi-Newton methods for nonconvex multiobjective optimization have been developed to use a single matrix to approximate the Hessian matrix of the aggregated objective functions. 
Although this strategy achieves a certain balance between approximation accuracy and the computational cost of subproblems, each iteration requires storing a dense $n \times n$ matrix and performing matrix-vector multiplications, leading to substantial storage and computational cost, particularly in large-scale settings, and significantly reduces the overall computational efficiency of the method.
To overcome this limitation, we propose a Barzilai--Borwein diagonal quasi-Newton (BB-DQN) method for nonconvex multiobjective optimization. The proposed method employs a shared diagonal matrix to approximate the Hessian matrix of the aggregated objective functions, where each diagonal entry is updated independently according to the Barzilai--Borwein criterion and restricted within adaptive bounds to ensure positive definiteness. Without imposing convexity assumptions, we establish the global convergence of the BB-DQN method and further prove its $R$-linear convergence rate under mild conditions. Numerical results demonstrate that, compared with existing quasi-Newton methods for nonconvex multiobjective optimization, the proposed BB-DQN method achieves superior performance in terms of computational time and iteration number, especially for large-scale problems.

\end{abstract}

\begin{keywords}
Nonconvex multiobjective optimization; quasi-Newton method; Barzilai-Borwein method; diagonal Hessian approximation; global convergence; $R$-linear convergence

\end{keywords}
\section{Introduction}
In this paper, we consider the following unconstrained multiobjective optimization problem:
\begin{equation}\tag{MOP}\label{MOP}
	\min_{x\in\mathbb{R}^{n}} f(x)=(f_{1}(x), f_{2}(x), \cdots, f_{m}(x))^{\top}
\end{equation}
where $f_{i}(x):\mathbb{R}^{n}\rightarrow\mathbb{R}$ is continuously differentiable for each $i\in\{1, 2, \cdots, m\}$.

Multiobjective optimization problems (MOPs) have attracted considerable attention due to their wide range of applications in many fields, including resource management \cite{zhang2022multi,zhang2025multi}, intelligent transportation \cite{wen2024ensemble,ma2025multi}, and machine learning \cite{qu2021comprehensive,liu2024stochastic}. 
Unlike single-objective optimization, MOPs involve the simultaneous optimization of multiple conflicting objective functions and are therefore well suited for modeling decision-making problems with competing criteria.

Over the past two decades, gradient-based methods for MOPs have been extensively studied because they possess strong convergence guarantees and do not require predefined preference parameters. 
In 2000, Fliege and Svaiter \cite{fliege2000steepest} proposed the steepest descent (SD) method for multiobjective optimization, where the steepest descent direction is obtained as the optimal solution of the following subproblem:
\begin{equation*}
	\min_{d \in \mathbb{R}^n}
	\max_{i=1, \ldots, m}
	 \nabla f_i(x)^\top d 
	+\frac{1}{2}\|d\|^2.
\end{equation*}
Since then, a variety of first-order methods have been developed by employing different strategies for computing the descent direction or selecting the stepsize. 
Tanabe et al. \cite{tanabe2019proximal} extended the proximal gradient framework to composite MOPs. 
Morovati et al. \cite{morovati2016barzilai} incorporated the Barzilai--Borwein (BB) rule into multiobjective optimization, avoiding expensive line searches by employing spectral-like stepsizes. 
Lucambio P\'erez and Prudente \cite{lucambio2018nonlinear} investigated nonlinear conjugate gradient methods combined with Wolfe-type line search conditions to improve numerical stability. 
More recently, Chen et al. \cite{chen2023barzilai} proposed a Barzilai--Borwein descent (BBD) method for multiobjective optimization, which alleviates the small-stepsize issue of SD method caused by imbalances among objective functions through adaptive gradient scaling in the direction-finding subproblem. 

Despite their relatively low per-iteration cost, first-order methods often exhibit slow convergence, especially for ill-conditioned or large-scale problems. 
To improve convergence behavior, second-order and quasi-Newton methods for MOPs have also been developed. 
Fliege et al. \cite{fliege2009newton} proposed a Newton-type method for unconstrained MOPs and established local superlinear convergence. 
However, the method requires computing the exact Hessian matrix of each objective function and the inverse of the aggregated Hessian matrix, resulting in expensively computational costs.

To reduce these costs while retaining favorable convergence properties, several quasi-Newton methods have been proposed by replacing exact Hessians with suitable approximations. 
Povalej \cite{povalej2014quasi} developed a BFGS-type method for strongly convex unconstrained MOPs by constructing independent Hessian approximations for each objective function:
\begin{equation*}
	B_j^{k+1}
	=
	B_j^k
	-
	\frac{B_j^k s_k (s_k)^T B_j^k}
	{(s_k)^T B_j^k s_k}
	+
	\frac{y_j^k (y_j^k)^T}
	{(y_j^k)^T s_k},
	\quad j=1,2,\cdots,m,
\end{equation*}
where
$s_k=x_{k+1}-x_k$ and 
$y_j^k=\nabla f_j(x_{k+1})-\nabla f_j(x_k),~j=1,2,\cdots,m.$
Moreover, the local superlinear convergence rate of the proposed method was established in \cite{povalej2014quasi}. 
Morovati et al. \cite{morovati2018quasi} employed self-scaling BFGS and Huang-BFGS updates to approximate the Hessian matrix of each objective function and proved $R$-linear convergence rate. 
Later, Lapucci and Mansueto \cite{lapucci2023limited} proposed the first limited-memory quasi-Newton method for unconstrained MOPs by extending the classical L-BFGS two-loop recursion to the multiobjective setting using a single Hessian approximation matrix. Under strong convexity assumptions, they established global convergence and $R$-linear convergence to Pareto optimality. 
More recently, Peng et al. \cite{peng2026diagonal} proposed a diagonal BFGS quasi-Newton method for unconstrained multiobjective optimization, which uses a diagonal positive definite matrix to approximate the Hessian of each objective function and incorporates both Armijo and Wolfe line search strategies, establishing global convergence under mild assumptions without requiring convexity.

For nonconvex unconstrained MOPs, Prudente and Souza \cite{prudente2024global} proposed a globally convergent BFGS-type method by introducing a correction strategy to preserve the positive definiteness of the Hessian approximations. Specifically, they defined
\begin{equation*}
\gamma_j^k = y_j^k + r_j^k s_k, \forall j = 1, 2, \dots, m,
\end{equation*}
and updated the approximation matrices via
\begin{equation*}
	B_j^{k+1}
	=
	B_j^k
	-
	\frac{B_j^k s_k (s_k)^T B_j^k}
	{(s_k)^T B_j^k s_k}
	+
	\frac{\gamma_j^k (\gamma_j^k)^T}
	{(\gamma_j^k)^T s_k},
	\quad j=1,2,\cdots,m,
\end{equation*}
where	$y_j^k=\nabla f_j(x_{k+1})-\nabla f_j(x_k)$, $s_k=x_{k+1}-x_k$ and 
$$r_j^k = \max\{-\eta_j^k, 0\} + \vartheta_k \left\|\sum_{i=1}^m \mu_i^k \nabla f_i(x_k)\right\|,$$
with $\eta_j^k = \frac{(y_j^k)^\top s_k}{\|s_k\|^2}$, $\vartheta_k$ being a fixed constant and  $\sum_{i=1}^m \mu_i^k \nabla f_i(x_k)$ being in the convex hull of the gradients of the objective functions at $x_k$.
Although global convergence is guaranteed in the nonconvex setting, the method still requires constructing and storing multiple dense Hessian approximation matrices and solving quadratic subproblems at each iteration, resulting in high computational cost.

To improve computational efficiency, Yang et al. \cite{yang2025global} proposed a modified BFGS-type (M-BFGS) method for nonconvex multiobjective optimization by using a single approximation matrix for the aggregated Hessian information:
\begin{align*}
	B_{k+1} &= B_k - \frac{B_k s_k s_k^\top B_k}{s_k^\top B_k s_k} + \frac{\gamma_k \gamma_k^\top}{\gamma_k^\top s_k},
\end{align*}
where 
\begin{align*}
\gamma_k &= y_k + m_k s_k,\\
m_k &= \max\left\{ -\frac{y_k^\top s_k}{\{s_k\}^2}, 0\right\} + \sum_{i=1}^m \lambda_i^k \left( f_i(x_k) - f_i(x_{k+1}) \right), \\
y_k&=\sum_{i=1}^m \lambda_i^k ( \nabla f_i(x_{k+1}) - \nabla f_i(x_k)),\\
s_k &= x_{k+1} - x_k,
\end{align*}
and $\lambda^k$ is the Lagrange multipliers from the subproblem.
Subsequently, Hu et al. \cite{hu2026diagonal} presented a diagonal quasi-Newton method (D-QN) by approximating the aggregated Hessian matrix with a common diagonal positive definite matrix:
\begin{align*}
    B_{k+1} &= B_k - \frac{B_k s_k (s_k)^T B_k}{(s_k)^T B_k s_k} + \frac{u_k (u_k)^T}{(u_k)^T s_k},\\
	D_{k+1} &= \operatorname{diag}\big( (B_{k+1})_{ii} \big), \quad i = 1, \ldots, n,
\end{align*}
where 
\begin{align*}
    u_k& = \sum_{j=1}^m \lambda_j(y_j^k + t_j^k s_k),\\
	t_j^k &= \|\nabla f_j(x_k)\| + \max\left\{-\frac{(y_j^k)^T s_k}{\|s_k\|^2}, 0\right\}, \\
   y_j^k &= \nabla f_j(x_{k+1}) - \nabla f_j(x_k),\\
   s_k &= x_{k+1} - x_k,
\end{align*}
and $\lambda^k$ is the Lagrange multipliers from the subproblem.

These quasi-Newton methods for nonconvex multiobjective optimization achieve a compromise between approximation accuracy and the computational cost of solving the associated subproblems. Nevertheless, they suffer from two major limitations:
\begin{itemize}
  \item[(i)] Each iteration requires storing a dense $n \times n$ approximation matrix, resulting in substantial memory requirements, especially for large-scale problems;
  
  \item[(ii)] The matrix update involves matrix-vector products with computational complexity $\mathcal{O}(n^2)$, which may severely limit the efficiency of existing quasi-Newton algorithms in large-scale settings.
\end{itemize}

It is worth noting that the Barzilai-Borwein strategy has been shown to effectively capture local curvature information while maintaining very low computational complexity, even in multiobjective optimization settings \cite{chen2023barzilai}. Moreover, in the single-objective setting, Park et al. \cite{park2020diagonal} proposed a variable metric proximal gradient method, which employs a diagonal Barzilai-Borwein matrix as the variable metric to balance computational efficiency and geometric adaptability. The method preserves a low computational complexity of $\mathcal{O}(n)$ while effectively capturing scaling differences among coordinate directions, thereby achieving an ideal balance between low computational cost and good Hessian approximation.
These observations naturally motivate the following question:
\medskip

\emph{
Can the diagonal Barzilai-Borwein strategy be incorporated into the construction of diagonal Hessian approximation matrices for nonconvex multiobjective optimization, so as to develop efficient quasi-Newton method that reduce computational cost while preserving essential curvature information?
}

\medskip

In this paper, inspired by \cite{chen2023barzilai,park2020diagonal,yang2025global}, we propose a Barzilai-Borwein diagonal quasi-Newton (BB-DQN) method for nonconvex \eqref{MOP}. 
Based on the diagonal Barzilai-Borwein strategy \cite{park2020diagonal}, a single diagonal matrix is constructed to approximate the Hessian matrix of the aggregated objective functions.  We establish the global convergence of the proposed BB-DQN method without requiring convexity assumptions. Under mild conditions, the BB-DQN method is shown to achieve $R$-linear convergence rate. Numerical experiments further demonstrate the superior performance of the proposed method. The main contributions of this paper are summarized as follows:
\begin{itemize}
	\item A common diagonal matrix based on the diagonal Barzilai-Borwein strategy is introduced to simultaneously capture the local curvature information of the aggregated Hessian matrices associated with multiple objective functions. This design significantly reduces the computational and storage costs per iteration.
	
\item For nonconvex multiobjective optimization problems, the proposed BB-DQN method is  globally convergent without requiring convexity assumptions, and achieves $R$-linear convergence under mild conditions.

\item Numerical experiments on benchmark problems demonstrate the efficiency of the proposed method. Compared with existing quasi-Newton methods, such as those in \cite{yang2025global} and \cite{hu2026diagonal}, the BB-DQN method exhibits competitive performance in terms of computational time, number of iterations, function evaluations, and solution quality, while requiring significantly lower computational cost per iteration.  
\end{itemize}

The remainder of this paper is organized as follows. In Sect. \ref{sec:Preliminary}, we introduce notations and definitions used throughout the paper. Sect. \ref{sec3} reviews several multiobjective descent methods, including SD method, QN method, and MQN method. In Sect. \ref{sec4}, we present the proposed BB-DQN method and establish its convergence analysis. Numerical results are reported in Sect. \ref{sec5}, and concluding remarks are given in the final section.
\section{Preliminaries}\label{sec:Preliminary}

This section introduces the notation, definitions, and preliminary results used throughout the paper. Let $\mathbb{R}^n$ denote the $n$-dimensional Euclidean space, and let $\mathbb{R}*+^n$ denote its nonnegative orthant. The interior of $\mathbb{R}*+^n$ is defined by
$$
\operatorname{int}(\mathbb{R}_+^n)
:=\left\{x=(x_1,\dots,x_n)^\top \in \mathbb{R}_+^n : x_i>0, \forall i\in[n]\right\}.
$$
For any vectors $x=(x_1,\dots,x_n)^\top$ and $y=(y_1,\dots,y_n)^\top \in \mathbb{R}^n$, we define the partial orders
\begin{align*}
x \geq y
&\quad \Longleftrightarrow \quad
x-y \in \mathbb{R}_+^n,\\
x > y
&\quad \Longleftrightarrow \quad
x-y \in \operatorname{int}(\mathbb{R}_+^n),
\end{align*}
Let $[m]:={1,2,\dots,m}$ denote the index set of integers from $1$ to $m$. The unit simplex in $\mathbb{R}^m$ is defined by
$$
\Delta_m
:=
\left\{
\lambda \in \mathbb{R}^m :
\sum_{i=1}^m \lambda_i = 1,;
\lambda_i \geq 0,\ \forall i\in[m]
\right\}.
$$
The notation $\|\cdot\|$ denotes the Euclidean norm in $\mathbb{R}^n$. For each function $f_i$, its gradient and Hessian matrix at a point $x\in\mathbb{R}^n$ are denoted by $\nabla f_i(x)\in\mathbb{R}^n$ and $\nabla^2 f_i(x)\in\mathbb{R}^{n\times n}$, respectively, while the Jacobian matrix of $F$ at $x$ is denoted by $JF(x)\in\mathbb{R}^{m\times n}$. Furthermore, for any $x\in\mathbb{R}^n$ and any symmetric positive definite matrix $B\in\mathbb{R}^{n\times n}$, we define
$$
\|x\|_B
:=\sqrt{x^\top Bx}.
$$
For symmetric matrices $A,B\in\mathbb{R}^{n\times n}$, the notation
$
A \preceq (\prec), B
$
means that
$$
B-A \succeq (\succ), 0.
$$

To proceed with the subsequent analysis, we first recall several fundamental concepts of Pareto optimality  to (\ref{MOP}):

\begin{definition}\cite{fliege2000steepest}
	A point \(x^* \in \mathbb{R}^n\) is called a (weakly) Pareto optimal solution of (\ref{MOP}) if there exists no \(x \in \mathbb{R}^n\) such that \(f(x) \leq f(x^*)\) and \(f(x) \neq f(x^*)\) (or \(f(x) < f(x^*)\)).
\end{definition}

\begin{definition}\cite{fliege2000steepest}
	A point \(x^* \in \mathbb{R}^n\) is a local (weakly) Pareto optimal solution if there exists a neighborhood \(U \subseteq \mathbb{R}^n\) of \(x^*\) such that \(x^*\) is (weakly) Pareto optimal on \(U\).
\end{definition}

\begin{definition}\cite{fliege2000steepest}
	A point \(x^* \in \mathbb{R}^n\) is a Pareto critical point of (\eqref{MOP}) if for every direction \(d \in \mathbb{R}^n\), there exists an index \(i \in [m]\) such that \(\langle \nabla f_i(x^*), d \rangle \geq 0\).
\end{definition}

\begin{definition}\cite{fliege2000steepest}
	A vector \(d \in \mathbb{R}^n\) is a descent direction for \(f\) at \(x\) if \(\langle \nabla f_i(x), d \rangle < 0\) for all \(i \in [m]\).
\end{definition}

We now present the following lemma, which establishes the relationships among the three notions of Pareto optimality.

\begin{lemma}[\cite{fliege2009newton}, Theorem 3.1]
Assume that the objective function $f$ is continuously differentiable. Then the following statements hold:
\begin{enumerate}[(i)]
\item If $x^{*}$ is a local weakly Pareto optimal solution of \eqref{MOP}, then $x^{*}$ is a Pareto critical point of \eqref{MOP}.

\item If $f$ is convex and $x^{*}$ is a Pareto critical point of \eqref{MOP}, then $x^{*}$ is a weakly Pareto optimal solution of \eqref{MOP}.

\item If $f$ is strongly convex and $x^{*}$ is a Pareto critical point of \eqref{MOP}, then $x^{*}$ is a Pareto optimal solution of \eqref{MOP}.
\end{enumerate}
\end{lemma}

\section{Descent Methods for Multiobjective Optimization Problems}\label{sec3}
The steepest descent method for \eqref{MOP}, introduced by Fliege and Svaiter \cite{fliege2000steepest}, serves as the fundamental descent approach. For a given point $x \in \mathbb{R}^{n}$, the search direction is obtained by solving the subproblem:
\begin{equation}\label{minmax}
	\min_{d \in \mathbb{R}^{n}} \max_{i \in [m]} ~\nabla f_i(x)^\top d + \frac{1}{2} \| d \|^{2},
\end{equation}
which can be transformed into the following equivalent optimization problem (\ref{2}).
\begin{equation}\label{2}
	\begin{aligned}
		\min_{(t, d) \in \mathbb{R} \times \mathbb{R}^n} \quad & t + \frac{1}{2} \| d \|^2 \\
		\text{s.t.} \qquad & \nabla f_i(x)^\top d \leqslant t, \quad i = 1, \ldots, m.
	\end{aligned}
\end{equation}
Since (\ref{2}) is convex with linear constraints, strong duality holds. Consider that the dual problem of problem (\ref{2}):
$$
\max_{\lambda \in \Delta_{m}} \min_{(t, d) \in \mathbb{R} \times \mathbb{R}^n} t + \frac{1}{2} \| d \|^{2} + \sum_{i \in [m]} \lambda_{i} \left( \nabla f_i(x)^\top d - t \right).
$$
According to the Karush-Kuhn-Tucker (KKT) condition, we get
 \begin{align}
& d + \sum_{i \in [m]} \lambda_i \nabla f_i(x) = 0, \label{eq:KKT_d} \\
& \sum_{i \in [m]} \lambda_i = 1, \\
& \lambda_i \big( \nabla f_i(x)^\top d - t \big) = 0, \quad i \in [m],\\
& \nabla f_i(x)^\top d\rangle \le t, \quad i \in [m], \\
& \lambda_i \ge 0, \quad i \in [m].
\end{align}
From \eqref{eq:KKT_d}, the steepest descent direction is given by
\begin{equation}\label{dSD}
	d_{\mathrm{SD}}(x) = - \sum_{i \in [m]} \lambda_{i}^{\mathrm{SD}}(x) \nabla f_{i}(x),
\end{equation}
where $\lambda_{\mathrm{SD}}(x) = \left( \lambda_{1}^{\mathrm{SD}}(x), \lambda_{2}^{\mathrm{SD}}(x), \cdots, \lambda_{m}^{\mathrm{SD}}(x) \right)^{\top}$ is the solution of the following dual problem
\begin{equation}\label{4}
	- \min_{\lambda \in \Delta_{m}} \frac{1}{2} \left\| \sum_{i \in [m]} \lambda_{i} \nabla f_{i}(x) \right\|^{2}.
\end{equation}

\begin{proposition}[\cite{fliege2000steepest}]\label{le2}
	Let $\theta_{\mathrm{SD}}(x)$ and $d_{\mathrm{SD}}(x)$ denote the optimal value and the optimal solution of problem (\ref{minmax}), respectively. The following statements hold:
	\begin{enumerate}
		\item[(i)] If \( x \) is a Pareto critical point of problem (\ref{minmax}), then \( d_{\mathrm{SD}}(x) = 0 \) and \(\theta_{\mathrm{SD}}(x) = 0 \);
		\item[(ii)] If \( x \) is not a Pareto critical point of problem (\ref{minmax}), then \( d_{\mathrm{SD}}(x) \neq 0 \), \( \theta_{\mathrm{SD}}(x)<0 \), and 
$$\mathcal{D}(x, d_{SD}(x)) := \max_{i \in [m]}\nabla f_{i}(x)^{\top}d_{SD}(x) \leqslant -\frac{\|d_{SD}(x)\|^{2}}{2};$$
		\item[(iii)] The mapping  $d_{\mathrm{SD}}(\cdot)$ is continuous.
	\end{enumerate}
\end{proposition}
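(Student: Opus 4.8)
The plan is to prove each of the three statements in turn, relying on the convexity and coercivity of the objective $d \mapsto \max_{i\in[m]}\langle\nabla f_i(x),d\rangle + \tfrac12\|d\|^2$ in (\ref{minmax}), and on its equivalence with the smooth quadratically-constrained problem (\ref{2}) and its dual (\ref{4}).

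First I would record the elementary facts. The function $\varphi_x(d) := \max_{i\in[m]}\langle\nabla f_i(x),d\rangle + \tfrac12\|d\|^2$ is strongly convex (the max of affine functions is convex, and $\tfrac12\|d\|^2$ is $1$-strongly convex) and coercive, so (\ref{minmax}) has a unique minimizer $d_{\mathrm{SD}}(x)$ and a finite optimal value $\theta_{\mathrm{SD}}(x)$. Evaluating $\varphi_x$ at $d=0$ gives $\varphi_x(0)=0$, hence $\theta_{\mathrm{SD}}(x)\le 0$ always. Introducing the slack variable $t$ shows (\ref{2}) is equivalent to (\ref{minmax}), and since (\ref{2}) is a convex problem with Slater's condition satisfied (e.g.\ $d=0$, $t=1$ is strictly feasible), strong duality holds and the KKT conditions characterize the solution; the dual reduces to (\ref{4}) with $d_{\mathrm{SD}}(x)=-\sum_{i}\lambda_i^{\mathrm{SD}}(x)\nabla f_i(x)$ as in (\ref{dSD}).

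For (i), suppose $x$ is Pareto critical, so by Definition 3 for every $d$ there is $i$ with $\langle\nabla f_i(x),d\rangle\ge 0$, which forces $\max_{i}\langle\nabla f_i(x),d\rangle\ge 0$ for all $d$; therefore $\varphi_x(d)\ge \tfrac12\|d\|^2\ge 0$ for all $d$, and combined with $\varphi_x(0)=0$ this shows $d=0$ is the minimizer, giving $d_{\mathrm{SD}}(x)=0$ and $\theta_{\mathrm{SD}}(x)=0$. For (ii), suppose $x$ is not Pareto critical: then there exists $\bar d$ with $\langle\nabla f_i(x),\bar d\rangle<0$ for all $i$, so $\max_i\langle\nabla f_i(x),\bar d\rangle<0$; scaling $\bar d$ by a small $\tau>0$ makes $\varphi_x(\tau\bar d)=\tau\max_i\langle\nabla f_i(x),\bar d\rangle+\tfrac{\tau^2}{2}\|\bar d\|^2<0$ for $\tau$ small, hence $\theta_{\mathrm{SD}}(x)<0$, and since $\varphi_x(0)=0$ the minimizer cannot be $0$, i.e.\ $d_{\mathrm{SD}}(x)\neq 0$. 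For the inequality $\mathcal D(x,d_{\mathrm{SD}}(x))\le -\tfrac12\|d_{\mathrm{SD}}(x)\|^2$, note $\theta_{\mathrm{SD}}(x)=\mathcal D(x,d_{\mathrm{SD}}(x))+\tfrac12\|d_{\mathrm{SD}}(x)\|^2$ by definition of $\varphi_x$, and comparing with $\varphi_x\big(\tfrac12 d_{\mathrm{SD}}(x)\big)\ge\theta_{\mathrm{SD}}(x)$ — or more directly, convexity/optimality of $d_{\mathrm{SD}}(x)$ — yields $\mathcal D(x,d_{\mathrm{SD}}(x))\le\theta_{\mathrm{SD}}(x)\le -\tfrac12\|d_{\mathrm{SD}}(x)\|^2$; I would pick whichever one-line comparison is cleanest (in fact $\theta_{\mathrm{SD}}(x)\le \tfrac12\varphi_x(0)+\tfrac12\varphi_x(d_{\mathrm{SD}}(x))$ type averaging, or simply that $\max_i\langle\nabla f_i(x),d_{\mathrm{SD}}(x)\rangle \le \theta_{\mathrm{SD}}(x) - \tfrac12\|d_{\mathrm{SD}}(x)\|^2 \le -\tfrac12\|d_{\mathrm{SD}}(x)\|^2$ since $\theta_{\mathrm{SD}}(x)\le 0$).

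For (iii), continuity of $d_{\mathrm{SD}}(\cdot)$, the approach is a standard argument via Berge's maximum theorem or a direct $\varepsilon$–$\delta$ estimate: the map $(x,d)\mapsto\varphi_x(d)$ is jointly continuous, the feasible set $\mathbb R^n$ is fixed, and uniform strong convexity in $d$ (modulus $1$, independent of $x$) gives the quadratic growth bound $\varphi_x(d)-\varphi_x(d_{\mathrm{SD}}(x))\ge\tfrac12\|d-d_{\mathrm{SD}}(x)\|^2$; combining this with local boundedness of the gradients $\nabla f_i$ near a fixed $\bar x$ (by continuity) to control $|\varphi_x(d)-\varphi_{\bar x}(d)|$ on bounded sets shows $d_{\mathrm{SD}}(x)\to d_{\mathrm{SD}}(\bar x)$ as $x\to\bar x$. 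I expect this last part to be the only step requiring genuine care — the first two parts are essentially immediate from $\varphi_x(0)=0$ plus (non)existence of a common descent direction — so in the write-up I would either invoke the maximum theorem citing \cite{fliege2000steepest} or spell out the quadratic-growth estimate in a couple of lines.
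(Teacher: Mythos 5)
Your proposal is correct, and there is nothing in the paper to compare it against: the paper states this proposition with a citation to Fliege and Svaiter \cite{fliege2000steepest} and gives no proof of its own. Your argument — $\varphi_x(0)=0$ plus the (non)existence of a common descent direction for (i)--(ii), the identity $\mathcal{D}(x,d_{\mathrm{SD}}(x))=\theta_{\mathrm{SD}}(x)-\tfrac12\|d_{\mathrm{SD}}(x)\|^2\le-\tfrac12\|d_{\mathrm{SD}}(x)\|^2$, and the uniform strong-convexity/quadratic-growth estimate for (iii) — is exactly the standard proof in that reference and is sound as written.
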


A modified quasi-Newton (MQN) method for solving \eqref{MOP}, proposed by Ansary and Panda \cite{ansary2015modified}, employs a single positive definite matrix to approximate all Hessian matrices, thereby significantly reducing the computational complexity. The modified quasi-Newton direction $d(x)$ is obtained as the optimal solution of the following subproblem:
\begin{equation}\label{mqn}
	\min_{d \in \mathbb{R}^{n}} \max_{i \in [m]} \nabla f_{i}(x)^{\top}d + \frac{1}{2}d^{\top}B(x)d,
\end{equation}
where $B(x) \in \mathbb{R}^{n \times n}$  is an approximation of the Hessian matrix $\sum_{i \in [m]} \lambda_i\nabla^2 F_i(x)$
and $B(x) \succ 0$. Similar to the analysis in the previous subsection, subproblem (\ref{mqn}) is equivalent to the following convex quadratic optimization problem:
\begin{equation*}
	\begin{aligned}
		\min_{(t,d) \in \mathbb{R} \times \mathbb{R}^{n}} \quad &t \\
		\text{s.t. }\qquad & \nabla f_{i}(x)^{\top}d + \frac{1}{2}d^{\top}B(x)d \leqslant t, \quad \forall i \in [m].
	\end{aligned}
\end{equation*}
By using the KKT conditions, the modified quasi-Newton direction is obtained as
\begin{equation}\label{d}
	d(x) = -B(x)^{-1}\left(\sum_{i=1}^{m}\lambda_{i}(x)\nabla f_{i}(x)\right),
\end{equation}
where $\lambda(x) = (\lambda_{1}(x), \lambda_{2}(x), \cdots, \lambda_{m}(x))^{\top}$ is the solution of the following dual problem
\begin{equation}\label{-min}
	-\min_{\lambda \in \Delta_{m}} \frac{1}{2}\left\|\sum_{i \in [m]}\lambda_{i}\nabla f_{i}(x)\right\|_{B(x)^{-1}}^{2}.
\end{equation}
Then the optimal value of subproblem (\ref{mqn}) is given by
\begin{equation}\label{theta}
	\theta(x) = -\frac{1}{2}d(x)^{\top}B(x)d(x).
\end{equation}
Moreover, we can get
\begin{equation}\label{D}
	\mathcal{D}(x,d(x)) := \max_{i \in [m]}\nabla f_{i}(x)^{\top}d(x) =-\|d(x)\|_{B(x)}^{2} \leqslant -\frac{1}{2}\|d(x)\|_{B(x)}^{2}.
\end{equation}

\begin{proposition}[\cite{ansary2015modified}, Theorem 4.2]\label{le3}
	Let \( \theta(x) \) and \( d(x) \) be the optimal value and solution of problem (\ref{mqn}), respectively. Then, the following statements hold.
		\begin{enumerate}
		\item[(a)] \( d(x) \neq 0 \) is a descent direction.
		
		\item[(b)] The following conditions are equivalent:
		\begin{enumerate}
			\item[(i)] The point \( x \in \mathbb{R}^{n} \) is not Pareto critical for (\ref{MOP});
			\item[(ii)] \( d(x) \neq 0 \);
			\item[(iii)] \( \theta(x) < 0 \).
		\end{enumerate}
	\end{enumerate}
\end{proposition}

\section{BB-DQN: Barzilai-Borwein Diagonal Quasi-Newton Method for Nonconvex Multiobjective Optimization Problems}\label{sec4}
In this section, we propose a novel quasi-Newton method for nonconvex multiobjective optimization problems (MOPs), called the Barzilai--Borwein Diagonal Quasi-Newton (BB-DQN) method.
At each iteration $k$, a single diagonal matrix $B_k$ is constructed via the diagonal Barzilai--Borwein strategy \cite{park2020diagonal} to approximate the aggregated Hessian matrix
$\sum_{i \in [m]} \lambda_i \nabla^2 F_i(x_k)$
associated with the objective functions.

\begin{algorithm}[H]
	\caption{BB-DQN: Barzilai-Borwein Diagonal Quasi-Newton Method for Nonconvex Multiobjective Optimization Problems}\label{alg:mbfgsmo2}
	\begin{algorithmic}[1]
		\Require $c_0 \in (0, 1]$, $c_1, c_2, \mu\in (0, \infty)$, $0 < \sigma_1 < \sigma_2 < 1$, $\varepsilon \geq 0$, $x_0 \in \mathbb{R}^n$, $B_0 > 0$, $\alpha_{-1} = [\alpha_{1}^{-1}, \ldots, \alpha_{n}^{-1}]^\top > 0$	
		
		\State Initialize $k \gets 0$.
		\State  Compute $\lambda_k = (\lambda_1^k, \lambda_2^k, \cdots, \lambda_m^k)^\top$ from problem (\ref{-min}), and compute $d_k$ from (\ref{d}).
		\If{$\|d_k\| < \varepsilon$}
		\State \Return $x_k$.
		\EndIf
		\State Compute $\omega_k := \min\{c_0, c_1 \|d_k \|^{c_2}\}$
		
		\State  Compute a step size $t_k > 0$ satisfying:
		\begin{align}
			f_{i}(x_{k}+t_{k}d_{k}) &\leqslant f_{i}(x_{k})+\sigma_{1} t_{k}\mathcal{D}(x_{k},d_{k}),\quad\forall i\in[m], \label{w1}\\
			\mathcal{D}(x_{k}+t_{k}d_{k},d_{k}) &\geqslant \sigma_{2}\mathcal{D}(x_{k},d_{k}),\label{w2}
		\end{align}
		where $\mathcal{D}(x_k, d_k) := \max_{i \in [m]} \nabla f_i(x_k)^\top d_k$.
		
		\State  Update: $x_{k+1} = x_k + t_k d_k$, $s_k = t_k d_k$, $\mu_i^k := \nabla f_i(x_{k+1}) - \nabla f_i(x_k)$, $y_k := \sum_{i=1}^m \lambda_i^k \mu_i^k$.
		
		\If{$\langle s_k, y_k \rangle > 0$ and $[\dfrac{y_k^\top s_k}{\|s_k\|^2}, \dfrac{\|y_k\|^2}{y_k^\top s_k}] \cap [\omega_k, \omega_k^{-1}] \neq \varnothing$}
		\State Compute $\alpha_k^- := \max\left\{{\dfrac{y_k^\top s_k}{\|s_k\|^2}}, {\omega_k}\right\}$, $\alpha_k^+ := \min\left\{\dfrac{\|y_k\|^2}{y_k^\top s_k}, \omega_k^{-1}\right\}$.
				
	\Else
	\State Compute $[\alpha_k^-, \alpha_k^+]= [\omega_k, \omega_k^{-1}].$
	
	\EndIf
	\State Compute $\alpha_{j}^{k} =
	\begin{cases}
		{\alpha_k^-}, & \frac{s_{j}^{k} y_{j}^{k} + \mu \alpha_{j}^{k-1}}{(s_{j}^{k})^{2} + \mu} < {\alpha_k^-}; \\[6pt]
		{\alpha_k^+}, & \frac{s_{j}^{k} y_{j}^{k} + \mu \alpha_{j}^{k-1}}{(s_{j}^{k})^{2} + \mu} > {\alpha_k^+}; \\[6pt]
		\frac{s_{j}^{k} y_{j}^{k} + \mu \alpha_{j}^{k-1}}{(s_{j}^{k})^{2} + \mu}, & \text{otherwise},
	\end{cases}$\\
	where $ s_{j}^{k} $ and $ y_{j}^{k} $ are $ j^{\text{th}} $ elements of $ s^{k} $ and $ y^{k} $ respectively.
	\State Compute $B_{k} = \text{Diag}(\alpha_{k}), \alpha_{k} = [\alpha_{1}^{k}, \ldots, \alpha_{n}^{k}]^\top. $ 	
	\State $k \gets k + 1$
	\State Go to Step 1
\end{algorithmic}
\end{algorithm}

\begin{remark}
In \textbf{Step 3}, the vector $d_k$ serves as the optimality measure, with the algorithm terminating if $\|d_k\| < \varepsilon$. 	
In \textbf{Step 6}, the parameter $\omega_k = \min\{c_0, c_1 \|d_k\|^{c_2}\}$ provides an adaptive lower bound. In fact, this ensures that the eigenvalues of the Hessian approximation matrix are uniformly bounded. 	
	In \textbf{Step 7}, the step size $t_k$ is determined by a Wolfe-type line search ensuring sufficient decrease and curvature for all objectives. 	
	\textbf{Steps 9--16} are the core of the BB-DQN method. Instead of maintaining full matrices, a single diagonal matrix $B_k = \operatorname{Diag}(\alpha_k)$ is constructed. Each entry $\alpha_j^k$ is computed via a safeguarded and regularized Barzilai--Borwein formula.
\end{remark}

\begin{assum}\label{a1}
The level set $\mathcal{L}_{f(x_0)} = \{ x \in \mathbb{R}^n \mid f(x) \le f(x_0) \}$ is bounded, where $x_0 \in \mathbb{R}^n$ is the given starting point.
\begin{remark}
	Assumption \ref{a1} implies that the function \( f \) is bounded below on the set \(\mathcal{L}_{f(x_0)}\), which we will use later.
\end{remark}
\end{assum}
Here, we demonstrate that for every  \( k \in \mathbb{N} \), the Wolfe line search in Algorithm \ref{alg:mbfgsmo2} is guaranteed to terminate after a finite number of steps.

\begin{proposition}\label{p1}
Suppose that Assumption \ref{a1} holds and \( d_{k} \) is a descent direction. Then there exists an interval \( [t_{l}, t_{u}] \) with \( 0 < t_{l} < t_{u} \) such that for all \( t \in [t_{l}, t_{u}] \), the inequalities (\ref{w1}) and (\ref{w2}) are satisfied.
\end{proposition}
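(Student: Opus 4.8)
The plan is to adapt the classical existence proof for Wolfe-type stepsizes to the vector setting by working with two scalar functions of $t\ge 0$: the merit function $g(t):=\max_{i\in[m]}\bigl(f_i(x_k+td_k)-f_i(x_k)-\sigma_1 t\,\mathcal{D}(x_k,d_k)\bigr)$ attached to the Armijo-type condition (\ref{w1}) (so that (\ref{w1}) holds at $t$ iff $g(t)\le 0$), and the curvature function $t\mapsto\mathcal{D}(x_k+td_k,d_k)$ attached to (\ref{w2}). Both are continuous: $g$ is a finite maximum of continuous functions, and $\mathcal{D}(x_k+td_k,d_k)=\max_{i\in[m]}\nabla f_i(x_k+td_k)^\top d_k$ likewise. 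Since $d_k$ is a descent direction we have $\nabla f_i(x_k)^\top d_k<0$ for all $i$, hence $\mathcal{D}(x_k,d_k)<0$ and $d_k\neq 0$; these facts are used repeatedly.

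First I would show (\ref{w1}) holds for all small $t>0$: a first-order expansion of each $f_i$ at $x_k$ together with $\nabla f_i(x_k)^\top d_k\le\mathcal{D}(x_k,d_k)$ gives $g(t)\le t(1-\sigma_1)\mathcal{D}(x_k,d_k)+o(t)$, which is negative for $t$ small because $(1-\sigma_1)\mathcal{D}(x_k,d_k)<0$. Next I would show (\ref{w1}) must eventually fail: if $g(t)\le 0$ for every $t>0$, then, since $\sigma_1 t\,\mathcal{D}(x_k,d_k)<0$, we would have $f(x_k+td_k)\le f(x_k)$ for all $t\ge 0$, so the entire ray $\{x_k+td_k:t\ge 0\}$ lies in the level set $\mathcal{L}=\{x:f(x)\le f(x_k)\}$; as $d_k\neq 0$ this ray is unbounded, contradicting Assumption \ref{a1}. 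Hence $g(t_1)>0$ for some $t_1>0$.

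I then set $\bar t:=\inf\{t>0:g(t)>0\}$. By the previous two steps $0<\bar t<\infty$, and continuity of $g$ forces $g(\bar t)=0$ while $g\le 0$ on $[0,\bar t)$. Picking an index $i^\ast$ that attains the maximum defining $g(\bar t)$ gives $f_{i^\ast}(x_k+\bar t d_k)-f_{i^\ast}(x_k)=\sigma_1\bar t\,\mathcal{D}(x_k,d_k)$, and the mean value theorem applied to the $C^1$ scalar map $t\mapsto f_{i^\ast}(x_k+td_k)$ on $[0,\bar t]$ produces $\xi\in(0,\bar t)$ with $\nabla f_{i^\ast}(x_k+\xi d_k)^\top d_k=\sigma_1\mathcal{D}(x_k,d_k)$. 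Because $\sigma_1<\sigma_2$ and $\mathcal{D}(x_k,d_k)<0$, this yields $\mathcal{D}(x_k+\xi d_k,d_k)\ge\nabla f_{i^\ast}(x_k+\xi d_k)^\top d_k=\sigma_1\mathcal{D}(x_k,d_k)>\sigma_2\mathcal{D}(x_k,d_k)$, i.e.\ (\ref{w2}) holds \emph{strictly} at $\xi$, while (\ref{w1}) holds at $\xi$ since $\xi<\bar t$.

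Finally, to get an interval rather than a single point I would use continuity once more: the curvature function is continuous and strictly exceeds $\sigma_2\mathcal{D}(x_k,d_k)$ at $\xi$, hence on an open neighbourhood of $\xi$; after shrinking that neighbourhood to remain inside $(0,\bar t)$ (where $g\le 0$), every closed subinterval $[t_l,t_u]$ of it satisfies both (\ref{w1}) and (\ref{w2}). I expect the only delicate points to be: (i) handling the nonsmoothness of the max-type merit function $g$, which is avoided by using merely its continuity and by applying the mean value theorem to the single smooth component $f_{i^\ast}$; (ii) producing an interval instead of an isolated stepsize, for which the \emph{strict} curvature inequality at $\xi$ is exactly what is needed; and (iii) pinpointing where Assumption \ref{a1} is genuinely used, namely only in the step ruling out $g\le 0$ for all $t$.
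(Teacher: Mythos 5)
Your proof is correct and is essentially the standard existence argument for Wolfe-type stepsizes in the multiobjective setting (continuity of the max-type merit and curvature functions, eventual failure of the Armijo condition via boundedness of the level set, the mean value theorem on the maximizing component at the first crossing point, and a strict inequality to widen the point to an interval). The paper itself omits the proof and defers to Ansary--Panda, whose argument proceeds along the same lines, so your write-up simply supplies the details the paper leaves out.
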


\begin{proof}
The proof follows the same arguments as those used in Proposition 2 of \cite{lapucci2023limited} and is therefore omitted.
\end{proof}

Next, we examine the algorithm’s convergence without convex assumption; to proceed, we first state the requisite assumptions.
\begin{assum}\label{a3}
The gradient \( \nabla f_i \) is Lipschitz continuous with constant \( L_i > 0 \) on an open set \( C \) containing \(\mathcal{L}_{f(x_0)}\), i.e.,
\[
\| \nabla f_i(x) - \nabla f_i(y) \| \leq L_i \| x - y \| \quad \text{for all } x, y \in C \text{ and } i \in \left[m\right].
\]
\end{assum}
\begin{assum}\label{a4}
For any \( x \in \mathcal{L}_{f(x_0)}\), there exists constant \( a \) and \( b \) such that
\[aI_n \preceq B(x) \preceq bI_n.\]
\end{assum}

\begin{remark}
Assumption \ref{a4} is indeed reasonable because the diagonal entries of the Hessian approximation matrix $B_k$ lie between $\omega$ and $\omega^{-1}$. In fact, $a$ can be equal to $\min\{c_0, c_1 \varepsilon^{c_2}\}$, and $b$ can be equal to $\max\{c_0^{-1}, 1/(c_1 \varepsilon^{c_2})\}$.
\end{remark}

The next proposition establishes the relationship between $d_k$ and $d_{SD}^k$, and quantifies the per-iteration decrease of the objective f when algorithm is executed; throughout what follows we set \( L = \max_{i \in \langle m \rangle} L_i \).

\begin{proposition}\label{le4}
Suppose that Assumptions \ref{a3} and  \ref{a4} hold. Let \(\{x_k\}\) be the sequence produced by Algorithm \ref{alg:mbfgsmo2}. Then, for every \(i \in [m ]\) and all \( k \geq 0 \), we have
\begin{equation}\label{omega}
	\omega \| d_{SD}^k \|^2  \leq f_i(x_k) - f_i(x_{k+1}),
\end{equation}
where $\omega$ = \(\frac{\sigma_{1} (1-\sigma_{2}) a}{2b L}\).
\end{proposition}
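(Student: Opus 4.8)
The plan is to bound the step size $t_k$ from below using the two Wolfe-type conditions, and then feed that bound into the sufficient-decrease inequality (\ref{w1}). First I would use the curvature condition (\ref{w2}) together with the Lipschitz continuity of the gradients (Assumption \ref{a3}): since $\mathcal{D}(x_k + t_k d_k, d_k) \geq \sigma_2 \mathcal{D}(x_k, d_k)$, subtracting $\mathcal{D}(x_k, d_k)$ from both sides gives $\mathcal{D}(x_k + t_k d_k, d_k) - \mathcal{D}(x_k, d_k) \geq (\sigma_2 - 1)\mathcal{D}(x_k, d_k)$. The left-hand side is a difference of max-type quantities; picking the index $i$ that attains the max at $x_k + t_k d_k$ and using $\nabla f_i(x_k)^\top d_k \leq \mathcal{D}(x_k, d_k)$, one gets $(\nabla f_i(x_k + t_k d_k) - \nabla f_i(x_k))^\top d_k \geq (\sigma_2 - 1)\mathcal{D}(x_k, d_k)$. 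Cauchy–Schwarz and the Lipschitz bound then give $t_k L \|d_k\|^2 \geq (1 - \sigma_2)(-\mathcal{D}(x_k, d_k))$, so $t_k \geq \frac{(1-\sigma_2)(-\mathcal{D}(x_k,d_k))}{L\|d_k\|^2}$.

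Next I would plug this lower bound into (\ref{w1}): for every $i$,
\[
f_i(x_k) - f_i(x_{k+1}) \geq -\sigma_1 t_k \mathcal{D}(x_k, d_k) = \sigma_1 t_k (-\mathcal{D}(x_k, d_k)) \geq \frac{\sigma_1 (1-\sigma_2)}{L}\cdot\frac{\mathcal{D}(x_k,d_k)^2}{\|d_k\|^2}.
\]
Now I would use the descent estimate (\ref{D}), namely $\mathcal{D}(x_k, d_k) \leq -\|d_k\|_{B_k}^2$, which gives $\mathcal{D}(x_k, d_k)^2 \geq \|d_k\|_{B_k}^4$, and combine it with Assumption \ref{a4}: $\|d_k\|_{B_k}^2 \geq a\|d_k\|^2$, hence $\mathcal{D}(x_k,d_k)^2/\|d_k\|^2 \geq a^2\|d_k\|^4/\|d_k\|^2 = a^2\|d_k\|^2$. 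That would yield a decrease of order $\|d_k\|^2$, but the claim is stated in terms of $\|d_{SD}^k\|^2$, so the final step is to relate $\|d_k\|$ to $\|d_{SD}^k\|$.

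The main obstacle I anticipate is precisely this last comparison: I need an inequality of the form $\|d_k\|^2 \geq c\,\|d_{SD}^k\|^2$ for a constant $c$ depending on $a$, $b$. The idea is that both $d_k$ and $d_{SD}^k$ solve min-max subproblems (with quadratic terms $\frac{1}{2}d^\top B(x)d$ and $\frac{1}{2}\|d\|^2$ respectively), so their optimal values are comparable: $\theta(x_k) = \max_i \nabla f_i(x_k)^\top d_k + \frac{1}{2}\|d_k\|_{B_k}^2$ and $\theta_{SD}(x_k) \leq \max_i \nabla f_i(x_k)^\top d + \frac{1}{2}\|d\|^2$ for any $d$; choosing $d = d_k$ and using $\frac{1}{2}\|d_k\|^2 \leq \frac{1}{2a}\|d_k\|_{B_k}^2$ together with $\theta(x_k) = -\frac{1}{2}\|d_k\|_{B_k}^2$ should give $\theta_{SD}(x_k) \geq \frac{1}{a}\theta(x_k)$ or a similar relation; combined with $\theta_{SD}(x_k) \leq -\frac{1}{2}\|d_{SD}^k\|^2$ (Proposition \ref{le2}(ii)) and $\theta(x_k) = -\frac12\|d_k\|_{B_k}^2 \geq -\frac{b}{2}\|d_k\|^2$, this chains into $\|d_k\|^2 \geq \frac{a}{b}\|d_{SD}^k\|^2$ (up to getting the constants exactly right). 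Assembling all constants should produce exactly $\omega = \frac{\sigma_1(1-\sigma_2)a}{2bL}$, possibly after absorbing the $a^2$ and $\frac{a}{b}$ factors correctly — I would double-check the bookkeeping there, since that is where an off-by-a-factor error is most likely to creep in.
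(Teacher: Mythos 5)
Your derivation of the step-size lower bound $t_k\ge (1-\sigma_2)a/L$ and the use of (\ref{w1}) match the paper's first half, but the paper closes the argument far more directly than you anticipate: since in this method $B_k=\alpha_k I$ is a \emph{scalar} matrix, $d_k$ is exactly parallel to $d_{SD}^k$ (indeed $d_k=\alpha_k^{-1}d_{SD}^k$ and $\mathcal{D}(x_k,d_k)=\alpha_k^{-1}\mathcal{D}(x_k,d_{SD}^k)$), so the decrease $-\sigma_1 t_k\mathcal{D}(x_k,d_k)$ equals $-\sigma_1 t_k\alpha_k^{-1}\mathcal{D}(x_k,d_{SD}^k)\ge\frac{\sigma_1 t_k}{2\alpha_k}\|d_{SD}^k\|^2$ by Proposition \ref{le2}(ii), and $\alpha_k\le b$ then gives exactly $\omega$. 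No general comparison between the two subproblems is needed.

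The step you flag as the ``main obstacle'' is indeed where your argument breaks. The chain you propose --- testing $d=d_k$ in the steepest-descent subproblem and invoking $\tfrac12\|d_k\|^2\le\tfrac{1}{2a}\|d_k\|_{B_k}^2$ --- yields $\theta_{SD}(x_k)\le\bigl(\tfrac{1}{2a}-1\bigr)\|d_k\|_{B_k}^2$, which is a \emph{lower} bound on $\|d_{SD}^k\|^2=-2\theta_{SD}(x_k)$ in terms of $d_k$; you need an \emph{upper} bound, so the inequality is oriented the wrong way (and is vacuous when $a<1/2$). The usable general comparison comes from the other side of Assumption \ref{a4}: testing the rescaled direction $d=d_{SD}^k/b$ in the $B_k$-subproblem gives $\theta(x_k)\le\tfrac1b\theta_{SD}(x_k)$, hence $\|d_k\|_{B_k}^2\ge\tfrac1b\|d_{SD}^k\|^2$ and $\|d_k\|^2\ge\tfrac{1}{b^2}\|d_{SD}^k\|^2$. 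Even after this repair, your bookkeeping produces a constant such as $\sigma_1(1-\sigma_2)a^2/(Lb^2)$, or $\sigma_1(1-\sigma_2)a/(Lb)$ if you instead bound $\mathcal{D}(x_k,d_k)^2/\|d_k\|^2\ge a\,(-\mathcal{D}(x_k,d_k))\ge\tfrac{a}{b}\|d_{SD}^k\|^2$ --- in either case not the stated $\omega=\sigma_1(1-\sigma_2)a/(2bL)$. This still delivers a sufficient-decrease inequality adequate for the subsequent convergence theorems, but it does not prove the proposition with its advertised constant, and the comparison lemma it relies on must be stated and proved rather than waved at.
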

\begin{proof}
Since Assumption \ref{a3} holds, \(\nabla f_{i}(x)\) is Lipschitz continuous with constant \(L_{i}\), \(i \in [m]\), we have
\begin{equation}
	(\nabla f_{i}(x_{k+1}) - \nabla f_{i}(x_{k}))^{\top}(x_{k+1} - x_{k}) \leqslant L_{i} \| x_{k+1}  - x_{k} \|^{2}.
\end{equation}
Combining this inequality with condition \eqref{w2} yields
\begin{align}
	(\sigma_{2} - 1)\mathcal{D}(x_{k}, d_{k})
	&\leqslant \mathcal{D}(x_{k+1}, d_{k}) - \mathcal{D}(x_{k}, d_{k}) \nonumber\\
	&\leqslant \max_{i \in [m]} \left( \nabla f_{i}(x_{k+1}) - \nabla f_{i}(x_{k}) \right)^{\top} d_{k}\nonumber \\
	&= \frac{1}{t_{k}} \max_{i \in [m]} \left( \nabla f_{i}(x_{k+1}) - \nabla f_{i}(x_{k}) \right)^{\top} (x_{k} + t_{k} d_{k} - x_{k}) \nonumber\\
	&= \frac{1}{t_{k}} \max_{i \in [m]} \left( \nabla f_{i}(x_{k+1}) - \nabla f_{i}(x_{k}) \right)^{\top} (x_{k+1} - x_{k}) \nonumber\\
	&\leqslant \frac{1}{t_{k}} \max_{i \in [m]} L_{i} \| x_{k+1} - x_{k} \|^{2} \nonumber\\
	&= t_{k} L \| d_{k} \|^{2},\label{Dl}
\end{align}
where \(L = \max\limits_{i \in [m]} L_{i}\). Then by Assumption \ref{a4} and inequality (\ref{D}), we have
\begin{align*}
	(1 - \sigma_{2})a \| d_{k} \|^{2}
	&\leqslant (1 - \sigma_{2}) \| d_{k} \|_{B_{k}}^{2} \\
	&\leqslant (\sigma_{2} - 1)\mathcal{D}(x_{k}, d_{k}) \\
	&\leqslant t_{k} L \| d_{k} \|^{2},
\end{align*}
where the last inequality follows from inequality \eqref{Dl}.  Hence,
\begin{equation}
	t_{k} \geqslant \frac{(1 - \sigma_{2})a}{L}.
\end{equation}
By the equation (\ref{theta}) and Assumption \ref{a4}, we get
\begin{align*}
	-\frac{1}{2}\|d_{k}\|_{B_{k}}^{2}
	&=\min_{d\in \mathbb{R}^{n}}\max_{i\in[m]}\nabla f_{i}(x_{k})^{\top}d+\frac{1}{2}\|d\|_{B_{k}}^{2}\\
	&\leqslant\min_{d\in \mathbb{R}^{n}}\max_{i\in[m]}\nabla f_{i}(x_{k})^{\top}d+\frac{b}{2}\|d\|^{2}\\
	&\leqslant\frac{1}{b}\min_{d\in \mathbb{R}^{n}}\max_{i\in[m]}\nabla f_{i}(x_{k})^{\top}(bd)+\frac{1}{2}\|bd\|^{2}\\
	&=-\frac{1}{2b}\|d_{SD}^{k}\|^{2}.
\end{align*}
Together with the Wolfe line search, we obtain
\begin{equation}
\begin{aligned}
	f_i(x_k) - f_i(x_{k+1}) & \geq -\sigma_1 t_k D(x_k, d_k) \\
	& \geq \frac{1}{2}\sigma_1 t_k {\|d_k\|_{B_k}^{2}}\\
	& \geq \ \frac{\sigma_{1} t_k}{2b} \|d_{SD}^k\|^2\\
	& \geq \ \frac{\sigma_{1} (1-\sigma_{2}) a}{2b L} \|d_{SD}^k\|^2.
\end{aligned}
\end{equation}
\end{proof}

The preceding analysis establishes the monotonic descent property of Algorithm \ref{alg:mbfgsmo2}. We now proceed to prove its convergence.

\begin{theorem}\label{l1}
Assume that Assumptions \ref{a1}, \ref{a3} and \ref{a4} hold. Let \(\{x_k\}\) be the sequence generated by Algorithm \ref{alg:mbfgsmo2}. Then, every accumulation point of the sequence \(\{x_k\}\) is Pareto critical for (\ref{MOP}).
\end{theorem}

\begin{proof}
From Assumption \ref{a1} and Proposition \ref{le4}, it follows that \( f \) is bounded below and the sequence \(\{ f_i(x_k) \}\), \(i\in[m]\) is monotonically decreasing. Consequently, we get 
$$\lim_{k \to \infty} (f_i(x_k) - f_i(x_{k+1})) = 0.$$
Combing it with (\ref{omega}), we get $$\lim_{k \to \infty} \| d_{\text{SD}}^k \| = 0.$$
Furthermore, Assumption \ref{a1} implies that there exists an infinite subsequence \(x_{k_t}\) such that 
\begin{center}
\( x_{k_t} \xrightarrow{k_t} x^* \) and \(\| d_{SD}^{k_t} \| \xrightarrow{k_t} 0 \) as $t\rightarrow\infty$. 
\end{center}
By Proposition \ref{le2}, the mapping \( d_{SD}(x) \) is continuous, and thus
\[d_{SD}(x^*) = 0.\]
It then follows from Proposition \ref{le3} that \( x^* \) is Pareto critical for (\ref{MOP}).
\end{proof}

Under the additional assumption that each objective component is strongly convex and $L$-smooth, we further establish that the sequence of iterates generated by Algorithm converges to a Pareto optimal solution at an R-linear rate.

\begin{assum}\label{a5}
There exist constants \( U \) and \( V \) satisfying \( 0 < U \leq 1 \leq V \) such that
\[
U \| z \|^2 \leq z^{\top} \nabla^2 f_i(x) z \leq V \| z \|^2, \quad \forall  x \in \mathcal{L}_{f(x_0)}, z \in \mathbb{R}^n, i \in [m].
\]
\end{assum}

\begin{remark}\label{r2}
Assumption \ref{a5} implies that Assumptions \ref{a1} and \ref{a3} hold. Moreover, Assumption \ref{a4} is also satisfied for the sequence \(\{x_k\}\) generated by Algorithm \ref{alg:mbfgsmo2} when $k$ is sufficiently large. 

Let $x \in \mathcal{L}$. Recall that Assumption \ref{a5} states that the operator $\nabla f_i(x)$ is $\mu$-strongly monotone and $L$-Lipschitz continuous for all $i \in [m]$. Consequently, $\sum_{i=1}^{m}\lambda_{i} \nabla f_i(x)$ is $\mu$-strongly monotone and $L$-Lipschitz continuous for $\lambda_i \in \Delta_{m}$ and $i \in [m]$, i.e.,
\begin{equation} \label{eq:strong_mono}
	\langle \sum_{i=1}^{m}\lambda_{i} \nabla f_i(x) - \sum_{i=1}^{m}\lambda_{i} \nabla f_i(y), x - y \rangle \geq U \|x - y\|^2, \quad \forall x, y \in \mathcal{L},
\end{equation}
and
\begin{equation} \label{eq:lipschitz}
	\|\sum_{i=1}^{m}\lambda_{i} \nabla f_i(x) - \sum_{i=1}^{m}\lambda_{i} \nabla f_i(y)\| \leq V \|x - y\|, \quad \forall x, y \in \mathcal{L}.
\end{equation}
By setting $x = x_{k+1}$ and $y = x_k$, we obtain that
\[
y_k^T s_k \geq U \|s_k\|^2 \quad \text{and} \quad y_k^T s_k \geq \frac{U}{V^2} \|y_k\|^2,
\]
where $y_k=\sum_{i=1}^{m}\lambda_{i} \nabla f_i(x) - \sum_{i=1}^{m}\lambda_{i} \nabla f_i(y)$ and $s_k=x_{k+1}-x_k$. Then we have
\[
U \leq \dfrac{y_k^\top s_k}{\|s_k\|^2} \leq \dfrac{\|y_k\|^2}{y_k^\top s_k} \leq \dfrac{V^2}{U}.
\]
Therefore, Assumption \ref{a4} is satisfied for \(\{x_k\}\) generated by Algorithm \ref{alg:mbfgsmo2} when $k$ is large enough, such that the $\omega_k$ becomes negligible.
\end{remark}

\begin{theorem}\label{t2}
Suppose that Assumptions \ref{a4} and \ref{a5} hold. Let \(\{x_k\}\) be the sequence generated by Algorithm \ref{alg:mbfgsmo2} and \(x^*\) be the limit point of the sequence. Then, \(\{x_k\}\) converges R-linearly to \(x^*\).
\end{theorem}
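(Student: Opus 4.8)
The plan is to mimic the classical R-linear convergence argument for single-objective quasi-Newton methods (as in the analysis going back to Byrd--Nocedal and adapted to the multiobjective setting in \cite{ansary2015modified,yang2025global}), exploiting three facts: (1) strong convexity forces the per-iteration decrease estimate of Proposition \ref{le4} to control the distance to $x^*$; (2) the common BB-type matrix $B_k$ stays uniformly bounded above and below (Assumption \ref{a4}, which holds by Remark \ref{r2} together with the explicit safeguarding of $\alpha_k$ in $[\omega_k,\omega_k^{-1}]$ in Steps 9--16 of Algorithm \ref{alg:mbfgsmo2}); and (3) the steepest-descent residual $\|d_{SD}^k\|$ is, up to constants, equivalent to a natural optimality measure that is itself comparable to $\|x_k - x^*\|$ near $x^*$. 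First I would show, using Assumption \ref{a5}, that the merit function $\varphi(x) := \sum_{i} \lambda_i f_i(x)$ (with $\lambda$ the multiplier associated with $x^*$) is strongly convex with modulus $U$, so that
\[
\frac{U}{2}\|x_k - x^*\|^2 \;\leq\; \varphi(x_k) - \varphi(x^*) \;\leq\; \frac{L}{2}\|x_k - x^*\|^2,
\]
and similarly relate $\varphi(x_k) - \varphi(x^*)$ to $\|\nabla\varphi(x_k)\|^2$ and hence to $\|d_{SD}^k\|^2$ via the dual characterization (\ref{4}).

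Next I would convert the decrease inequality (\ref{omega}) of Proposition \ref{le4} into a contraction on the merit-function gap. Summing (\ref{omega}) over the $i$ weighted by $\lambda_i$ gives $\omega\|d_{SD}^k\|^2 \leq \varphi(x_k) - \varphi(x_{k+1})$; combining this with the lower bound $\|d_{SD}^k\|^2 \gtrsim \varphi(x_k)-\varphi(x^*)$ from the previous step yields
\[
\varphi(x_{k+1}) - \varphi(x^*) \;\leq\; (1 - c)\,\bigl(\varphi(x_k) - \varphi(x^*)\bigr)
\]
for some explicit $c \in (0,1)$ depending only on $\sigma_1,\sigma_2,a,b,L,U$. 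Hence $\varphi(x_k)-\varphi(x^*) \to 0$ Q-linearly, and the two-sided strong-convexity sandwich above then gives $\|x_k - x^*\|^2 \leq \frac{2}{U}\bigl(\varphi(x_k)-\varphi(x^*)\bigr) \leq \frac{2}{U}(1-c)^k\bigl(\varphi(x_0)-\varphi(x^*)\bigr)$, i.e. R-linear convergence of $\{x_k\}$ with rate $\sqrt{1-c}$. I would also need to confirm that the limit point $x^*$ is genuinely Pareto optimal (not merely critical), which follows from strong convexity of each $f_i$ together with Theorem \ref{l1}(ii).

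The main obstacle I anticipate is the step ``$\|d_{SD}^k\|^2 \gtrsim \varphi(x_k)-\varphi(x^*)$'', i.e. showing the steepest-descent residual does not decay faster than the merit gap. The subtlety is that the active multiplier vector $\lambda^k$ in the subproblem (\ref{-min}) need not equal the limiting $\lambda$, so one cannot directly identify $d_{SD}^k$ with $-\nabla\varphi(x_k)$. I would handle this by using Proposition \ref{le2}(ii), which gives $\mathcal D(x_k,d_{SD}^k) \leq -\tfrac12\|d_{SD}^k\|^2$ and, on the other hand, $\mathcal D(x_k,d_{SD}^k) = -\|d_{SD}^k\|^2$ together with the fact that $d_{SD}^k$ is the minimizer in (\ref{minmax}); comparing the optimal value of (\ref{minmax}) against the feasible choice $d = -(x_k - x^*)/\|x_k-x^*\|\cdot$(something) and invoking the strong-convexity inequality $f_i(x^*) \geq f_i(x_k) + \nabla f_i(x_k)^\top(x^*-x_k) + \tfrac U2\|x^*-x_k\|^2$ for every $i$ yields $\max_i \nabla f_i(x_k)^\top(x^*-x_k) \leq -(\varphi(x_k)-\varphi(x^*)) - \tfrac U2\|x_k-x^*\|^2 \leq -(\varphi(x_k)-\varphi(x^*))$, which after normalization bounds $\theta_{SD}(x_k)$ and hence $\|d_{SD}^k\|$ from below by a multiple of $\varphi(x_k)-\varphi(x^*)$ divided by $\|x_k - x^*\|$, and then using the upper sandwich $\|x_k-x^*\|^2 \lesssim \varphi(x_k)-\varphi(x^*)$ closes the loop. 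Carefully tracking these constants — and making sure the argument is uniform over $k$ using boundedness of $\mathcal L$ and of $B_k$ — is the technical heart of the proof; the rest is bookkeeping.
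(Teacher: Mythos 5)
Your proposal follows essentially the same route as the paper: establish the two-sided strong-convexity sandwich \(\tfrac{U}{2}\|x_k-x^*\|^2 \le \sum_i\lambda_i^*\bigl(f_i(x_k)-f_i(x^*)\bigr) \le \tfrac{L}{2}\|x_k-x^*\|^2\) (the paper's (\ref{19})), bound \(\|d_{SD}^k\|\) from below by a multiple of \(\|x_k-x^*\|\), feed this into the per-iteration decrease (\ref{omega}) to obtain a Q-linear contraction of the weighted merit gap, and convert back to \(\|x_k-x^*\|\) for the R-linear rate. The only place you genuinely diverge is the key lower bound on \(\|d_{SD}^k\|\): the paper obtains \(\tfrac{U}{2}\|x_k-x^*\|\le\|d_{SD}^k\|\) (its (\ref{18})) by writing \(d_{SD}^k=-\sum_i\lambda_i^k\nabla f_i(x_k)\) as in (\ref{dSD}), applying the strong-convexity inequality at \(x_k\) weighted by the \emph{current} multipliers \(\lambda^k\), using monotonicity of \(\{f_i(x_k)\}\) to discard the function-value terms, and finishing with Cauchy--Schwarz; you instead compare the optimal value of the primal subproblem (\ref{minmax}) against a trial direction pointing toward \(x^*\). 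Both work and give the same constant up to bookkeeping, so you correctly isolate the technical heart of the argument. One step in your sketch is stated backwards, though: you cannot conclude \(\max_i\bigl(f_i(x^*)-f_i(x_k)\bigr)\le\sum_i\lambda_i\bigl(f_i(x^*)-f_i(x_k)\bigr)\), since a maximum dominates any convex combination, not the reverse. Fortunately you do not need that inequality: Proposition \ref{le4} makes each \(\{f_i(x_k)\}\) monotonically decreasing to \(f_i(x^*)\), so \(\max_i\bigl(f_i(x^*)-f_i(x_k)\bigr)\le 0\), and retaining the \(-\tfrac{U}{2}\|x^*-x_k\|^2\) term while optimizing the scale of the trial direction already yields \(\theta_{SD}(x_k)\le-\tfrac{U^2}{8}\|x_k-x^*\|^2\), hence \(\|d_{SD}^k\|\ge\tfrac{U}{2}\|x_k-x^*\|\) via \(\theta_{SD}(x_k)=-\tfrac12\|d_{SD}^k\|^2\) from Proposition \ref{le2}; from there your contraction argument closes exactly as in the paper.
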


\begin{proof}
Based on Assumption \ref{a5} and the second-order Taylor expansion of \(f_i\) around \(x_k\), we have
\[
\frac{U}{2} \| x - x_k \|^2 \leq f_i(x) - f_i(x_k) - \nabla f_i(x_k)^T (x - x_k) \quad \text{for all } i \in [m].\]
Substituting \(x = x^*\)  and using (\ref{dSD}), we get that
\begin{equation}\label{17}
	\frac{U}{2} \| x^* - x_k \|^2 \leq \sum_{i=1}^{m} \lambda_i^{SD}(x_k) f_i(x^*) - \sum_{i=1}^{m} \lambda_i^{SD}(x_k) f_i(x_k) + (x^* - x_k)^T {d_{SD}^{k}} .
\end{equation}
It follows from Proposition \ref{le4} that
\[
\sum_{i=1}^{m} \lambda_i^{SD}(x_k) f_i(x^*) - \sum_{i=1}^{m} \lambda_i^{SD}(x_k) f_i(x_k) \leq 0.
\]
Combing this inequality with  (\ref{17}) and applying the Cauchy-Schwarz inequality, we derive 
\[
\frac{U}{2} \| x^* - x_k \|^2 \leq  (x^* - x_k)^T d_{SD}^{k} \leq  \| x^* - x_k \| \|d_{SD}^{k}\|,
\]
which implies 
\begin{equation}\label{18}
	\frac{U}{2} \| x^* - x_k \| \leq \| d_{SD}^{k} \|.
\end{equation}

On the other hand, the second-order Taylor expansion of \(F_i\) aroud \(x^*\), together with Assumption \ref{a5} implies 
\[
\frac{U}{2} \| x - x^* \|^2 \leq f_i(x) - f_i(x^*) - \nabla f_i(x^*)^T (x - x^*) \leq \frac{V}{2} \| x - x^* \|^2 \quad \text{for all } i \in [m].
\]

Setting \(x = x_k\) and taking a weighted sum \(\lambda^* \in \Delta_m\), we get
\begin{equation*}
	\sum_{i=1}^{m} \lambda_{i}^* f_i(x_k) - \sum_{i=1}^{m} \lambda_{i}^* f_i(x^*)-\sum_{i=1}^{m} \lambda_{i}^* \nabla f_i(x^*)^T (x - x^*) \leq \frac{V}{2} \| x_k - x^* \|^2.
\end{equation*}
By (\ref{dSD}) and  Proposition \ref{le2}, the middle term vanishes, yielding 
\begin{equation}\label{19}
 \sum_{i=1}^{m} \lambda_{i}^* f_i(x_k) - \sum_{i=1}^{m} \lambda_{i}^*f_i(x^*) \leq \frac{V}{2} \| x_k - x^* \|^2.
\end{equation}
Combining (\ref{18}) and \eqref{19}, we have
\begin{equation}\label{20}
	\sum_{i=1}^{m} \lambda_{i}^* f_i(x_k) - \sum_{i=1}^{m} \lambda_{i}^* f_i(x^*) \leq \frac{2V}{U^2} \| d_{SD}^{k} \|^2.
\end{equation}

Applying (\ref{omega}), we obtain
\[
\sum_{i=1}^{m} \lambda_{i}^* f_i(x_{k+1}) \leq \sum_{i=1}^{m} \lambda_{i}^* f_i(x_k) - \omega \| d_{SD}^{k} \|^2.
\]
Subtracting \(\sum_{i=1}^{m} \lambda_{i}^* f_i(x^*)\) from both sides of the above inequality and using (\ref{20}), it follows that
\begin{equation}\label{21}
	\begin{aligned}
		&\sum_{i=1}^{m} \lambda_{i}^* f_i(x_{k+1}) - \sum_{i=1}^{m} \lambda_{i}^* f_i(x^*)\\ \leq &\sum_{i=1}^{m} \lambda_{i}^* f_i(x_k) - \sum_{i=1}^{m} \lambda_{i}^* f_i(x^*) - \omega \| d_{SD}^{k} \|^2\\ \leq &\left( 1 - \frac{\omega U^2}{2V} \right) \left( \sum_{i=1}^{m} \lambda_{i}^* f_i(x_k) - \sum_{i=1}^{m} \lambda_{i}^* f_i(x^*) \right).
	\end{aligned}
\end{equation}
Under the choice $\omega$ = \(\frac{\sigma_{1} (1-\sigma_{2}) a}{2b L}\) specified in Proposition \ref{le4} and Remark \ref{r2}, it follows directly that  \(\omega U^2 / 2V \in (0, 1)\). Recursively applying  (\ref{21}), we get
\begin{align*}
	&\sum_{i=1}^{m} \lambda_{i}^* f_i(x^{k+1}) - \sum_{i=1}^{m} \lambda_{i}^* f_i(x^*) \\ \leq &\left( 1 - \frac{\omega U^2}{2V} \right)^{k+1} \left( \sum_{i=1}^{m} \lambda_{i}^* f_i(x^0) - \sum_{i=1}^{m} \lambda_{i}^* f_i(x^*) \right),
\end{align*}
which, combined with the left-hand side of (\ref{19}), gives
\[
\| x_{k+1} - x^* \| \leq \sqrt{\frac{2}{U} \left( \sum_{i=1}^{m} \lambda_{i}^* f_i(x^0) - \sum_{i=1}^{m} \lambda_{i}^* f_i(x^*) \right)} \cdot \left( 1 - \frac{\omega U^2}{2L} \right)^{(k+1)/2} .
\]
Therefore, the sequence \(\{x_k\}\) converges R-linearly to \(x^*\).
\end{proof}
	\section{Numerical Experiments}\label{sec5}
\label{sec:numerical_experiments}
This section presents some numerical experiments to illustrate the performance of the proposed BB-DQN method. We are particularly interested in verifying the effectiveness of 
updating the Hessian approximations at each iteration. For numerical comparison, we have considered the following methods:

\begin{itemize}
	\item\textbf{M-BFGS \cite{yang2025global}:} The Modified BFGS-type method for multiobjective optimization, in which the Hessian approximations are updated
	by:
	\begin{align}
		m_k &= \max\{-\frac{y_k^\top s_k}{\{s_k\}
			^2}, 0\} + \sum_{i=1}^m \lambda_i^k \left( f_i(x_k) - f_i(x_{k+1}) \right), \\
		B_{k+1} &= B_k - \frac{B_k s_k s_k^\top B_k}{s_k^\top B_k s_k} + \frac{\gamma_k \gamma_k^\top}{\gamma_k^\top s_k},
	\end{align}
	where $s_k := x_{k+1} - x_k, \mu_i^k := \nabla f_i(x_{k+1}) - \nabla f_i(x_k),  y_k := \sum_{i=1}^m \lambda_i^k \mu_i^k, \gamma_k = y_k + m_k s_k$.
	\item\textbf{D-QN \cite{hu2026diagonal}:} The diagonal Quasi-Newton method with a modified BFGS update.In this approach, the common diagonal positive definite matrix is utilized by
	\begin{align}
		D_{k+1} &= B_k - \frac{B_k s_k (s_k)^T B_k}{(s_k)^T B_k s_k} + \frac{u_k (u_k)^T}{(u_k)^T s_k},\\
		B_{k+1} &= \operatorname{diag}\big( (D_{k+1})_{ii} \big), \quad i = 1, \ldots, n,
	\end{align}
	where $u_k = \sum_{j=1}^m \lambda_j \eta_j^k$, $s_k = x_{k+1} - x_k$, $y_j^k = \nabla f_j(x_{k+1}) - \nabla f_j(x_k)$, for $j = 1, 2, \ldots, m$,
	\[
	\eta_j^k = y_j^k + t_j^k s_k, \quad t_j^k = \|\nabla f_j(x_k)\| + \max(-r_j^k, 0), \quad r_j^k = \frac{(y_j^k)^T s_k}{\|s_k\|^2}.
	\]
\end{itemize}

All algorithms were implemented in MATLAB 2018b. We used the Frank-Wolfe gradient method to solve the dual problem (\ref{-min}). The algorithms terminated when $\|d_k\| < \varepsilon$ or when the iteration count reached the maximum. All algorithms were implemented with identical parameters, the details of which are summarized in Table \ref{tab1:parameters}. The main characteristics of all test problems are summarized in Table \ref{tab1:problem_description} and \ref{tab2:problem_description}. The columns \texttt{n} and \texttt{m} represent the number of variables and objectives, respectively. The ‘Convex’ column specifies the convexity of the problem, with ‘\texttt{Y}’ denoting a convex MOP and ‘\texttt{N}’ denoting a nonconvex MOP. For each test problem, 200 runs were executed from randomly generated initial points within the bounds specified in Tables \ref{tab1:problem_description} and \ref{tab2:problem_description}. Performance was evaluated using the average CPU time in milliseconds (\texttt{time}), the average number of iterations (\texttt{iter}), the average number of function evaluations (\texttt{feval}), and the number of failure points (\texttt{NF}) that did not converge within the limit. We denote ‘Failed’ as ‘\texttt{F}’, which indicates that the total CPU time reached \num{2e5}s. The detailed numerical results are presented in Tables \ref{tab1:comparison_landscape} and \ref{tab3:comparison_landscape}.

To facilitate a clear comparison of the performance among the tested methods, we adopt the performance profiles introduced by Dolan and Mor{\'e}  \cite{dolan2002benchmarking}. Below, we provide a concise description of this tool.

Let $n_s$ denote the number of solvers and $n_p$ the number of test problems. For each solver $s \in \mathcal{S}$ and problem $p \in \mathcal{P}$, let $o_{p,s}$ represent the performance measure of solver $s$ on problem $p$. The performance ratio is defined as
\[
z_{p,s} = \frac{o_{p,s}}{\min\{o_{p,s} : s \in \mathcal{S}\}},
\]
and the corresponding cumulative distribution function $\rho : [1, \infty) \to [0, 1]$ is given by
\[
\rho(\tau) = \frac{|\{p \in \mathcal{P} : z_{p,s} \leq \tau\}|}{n_p}.
\]

The performance profile is then depicted by the cumulative distribution function $\rho$. Notably, $\rho(1)$ quantifies the probability that the solver outperforms all other solvers. In a performance profile, efficiency is reflected at the leftmost part of the graph (around $\tau = 1$), while robustness is characterized by the right-hand side of the graph.

Beyond the efficiency metrics, the quality of the obtained Pareto front approximation is critical. Following the methodology in \cite{custodio2011direct}, we employed the Purity, $\Gamma$, and $\Delta$ metrics for assessment, visualized via performance profiles.

\begin{itemize}
	\item \textbf{Purity}: Defined as $\tilde{t}_{p,s} = |F_{p,s} \cap F_{p}| / |F_{p}|$, where $F_{p,s}$ represent the approximate Pareto frontier obtained by solvers $s \in S$ for problem $p \in P$ and $F_p$ represent an approximation to the true Pareto frontier of problem $p$, computed by removing the dominant points from the set $\bigcup_{s \in S} F_{p,s}$. The purity metric. This metric measures the fraction of solutions obtained by an algorithm that belong to the true Pareto set; a higher value indicates better convergence.
	\item \textbf{$\Gamma$ metric}: Measures the maximum gap between consecutive solutions across all objective functions. Formally defined as
		\[
		\Gamma_{p,s} := \max_{i \in \{1,\dots,m\}} \max_{k \in \{0,\dots,N\}} \chi_{i,k},
		\]
		where $\chi_{i,k} = |f_i(x_{k+1}) - f_i(x_k)|$ represents the absolute difference between consecutive solutions along objective $i$. A lower $\Gamma$ value indicates a more uniform distribution without significant gaps in the frontier.
		
		\item \textbf{$\Delta$ metric}: Quantifies the relative deviation from a perfectly uniform spread, accounting for both intermediate solutions and the coverage of extreme points. It is defined as
		\[
		\Delta_{p,s} := \max_{i \in \{1,\dots , m\}} \left( \frac{\chi_{0,i} + \chi_{N,i} + \sum_{k=1}^{N-1} |\chi_{i,k} - \bar{\chi}_i|}{\chi_{0,i} + \chi_{N,i} + (N-1)\bar{\chi}_i} \right),
		\]
		where $\bar{\chi}_i = \frac{1}{N} \sum_{k=1}^{N} \chi_{k-1,i}$ is the average gap for objective $i$. A $\Delta$ value closer to 1 signifies a more even and well-distributed approximation, while a value near 0 indicates poor spread.
\end{itemize}
\begin{table}[htbp!]
	\centering
	\caption{The parameters for algorithms used in numerical experiments.}
	\label{tab1:parameters}
	\setlength{\tabcolsep}{10pt}
	\begin{tabular}{l c c c c c c c c}
		\toprule
		\textbf{Parameter} & $\varepsilon$ & \textbf{Max Iter} & $\sigma_1$ & $\sigma_2$ & $c_0$ & $c_1$ & $c_2$ & $B_0$ \\
		\midrule
		{Number} &$10^{-4}$ & 2000 & 0.01 & 0.9 & $10^{-4}$ & 1 & 3 & $I_n$ \\
		\bottomrule
	\end{tabular}
\end{table}

\begin{table}[htbp!]
	\centering
	\caption{The description of problems used in numerical experiments.}
	\label{tab1:problem_description}
	\begin{tabular}{@{}l c c l l c l@{}}
		\toprule
		\textbf{Problem} & \textbf{n} & \textbf{m} & \textbf{\(x_L\)} & \textbf{\(x_U\)} & \textbf{Convex} & \textbf{References} \\
		\midrule
		SLCDT1 & 2 & 2 & $(-1.5, -1.5)$ & $(1.5, 1.5)$ & N & \cite{schutze2008convergence} \\
		PNR & 2 & 2 & $(-1, -1)$ & $(1, 1)$ & Y & \cite{preuss2006pareto} \\
		MOP2 & 2 & 2 & $(-4, -4)$ & $(4, 4)$ & N & \cite{huband2006review} \\
		MOP5 & 2 & 3 & $(-1, -1)$ & $(1, 1)$ & N & \cite{huband2006review} \\
		MOP7 & 2 & 3 & $(-400, -400)$ & $(400, 400)$ & Y & \cite{huband2006review} \\
		Far & 2 & 2 & $(-1, -1)$ & $(1, 1)$ & N & \cite{huband2006review} \\
		KW2 & 2 & 2 & $(-1, -1)$ & $(1, 1)$ & N & \cite{kim2005adaptive} \\
		FF1 & 2 & 2 & $(-0.5, -0.5)$ & $(0.5, 0.5)$ & N & \cite{huband2006review} \\
		Deb & 2 & 2 & $(0.1, 0.1)$ & $(1, 1)$ & N & \cite{morovati2016barzilai} \\
		DD & 5 & 2 & $(-0.5, \ldots, -0.5)$ & $(0.5, \ldots, 0.5)$ & N & \cite{morovati2016barzilai} \\
		BK1 & 2 & 2 & $(-5, -5)$ & $(10, 10)$ & Y & \cite{huband2006review} \\
		WIT & 2 & 2 & $(-2, -2)$ & $(2, 2)$ & Y & \cite{chen2023barzilai} \\
		NT2a & 20 & 2 & $(-0.5, \ldots, -0.5)$ & $(0.5, \ldots, 0.5)$ & N & \cite{anh2025steepest} \\
		NT2b & 70 & 2 & $(-0.5, \ldots, -0.5)$ & $(0.5, \ldots, 0.5)$ & N & \cite{anh2025steepest} \\
		NT2c & 120 & 2 & $(-0.5, \ldots, -0.5)$ & $(0.5, \ldots, 0.5)$ & N & \cite{anh2025steepest} \\
		ZKG7 & 2 & 3 & $(2.27, 2.27)$ & $(2.47, 2.47)$ & N & \cite{ghosh2025newton} \\
		MHHM1 & 1 & 3 & $ 0 $ & $ 1 $ & Y & \cite{huband2006review} \\
		MHHM2 & 2 & 3 & $ (0, 0) $ & $ (1, 1) $ & Y & \cite{huband2006review} \\
		ZLTa & 4 & 4 & $ (-1000, \ldots, -1000) $ & $ (1000, \ldots, 1000) $ & Y & \cite{huband2006review} \\
		ZLTb & 6 & 6 & $ (-1000, \ldots, -1000) $ & $ (1000, \ldots, 1000) $ & Y & \cite{huband2006review} \\
		ZLTc & 8 & 8 & $ (-1000, \ldots, -1000) $ & $ (1000, \ldots, 1000) $ & Y & \cite{huband2006review} \\
		ZLTd & 10 & 10 & $ (-1000, \ldots, -1000) $ & $ (1000, \ldots, 1000) $ & Y & \cite{huband2006review} \\
		TE8a & 8 & 3 & $(0, \ldots, 0)$ & $(10, \ldots, 10)$ & Y & \cite{anh2025steepest} \\
		TE8b & 10 & 3 & $(0, \ldots, 0)$ & $(10, \ldots, 10)$ & Y & \cite{anh2025steepest} \\
		TE8c & 12 & 3 & $(0, \ldots, 0)$ & $(10, \ldots, 10)$ & Y &  \cite{anh2025steepest} \\
		JOS1a & 50 & 2 & $(-2, \ldots, -2)$ & $(2, \ldots, 2)$ & Y & \cite{jin2001dynamic} \\
		JOS1b & 100 & 2 & $(-2, \ldots, -2)$ & $(2, \ldots, 2)$ & Y & \cite{jin2001dynamic} \\
		JOS1c & 500 & 2 & $(-2, \ldots, -2)$ & $(2, \ldots, 2)$ & Y & \cite{jin2001dynamic} \\
		TOI4a & 40 & 2 & $(-2, \ldots, -2)$ & $(3, \ldots, 3)$ & Y & \cite{yin2025multiobjective} \\
		TOI4b & 100 & 2 & $(-2, \ldots, -2)$ & $(3, \ldots, 3)$ & Y & \cite{yin2025multiobjective} \\
		TOI4c & 500 & 2 & $(-2, \ldots, -2)$ & $(3, \ldots, 3)$ & Y & \cite{yin2025multiobjective} \\
		M-MAN1a & 50 & 2 & $ (-10, \ldots, -10) $ & $ (10, \ldots, 10) $ & Y & \cite{lapucci2023limited} \\
		M-MAN1b & 100 & 2 & $ (-10, \ldots, -10) $ & $ (10, \ldots, 10) $ & Y & \cite{lapucci2023limited} \\
		M-MAN1c & 500 & 2 & $ (-10, \ldots, -10) $ & $ (10, \ldots, 10) $ & Y & \cite{lapucci2023limited} \\
		MMR5a & 50 & 2 & $ (-5, \ldots, -5) $ & $ (5, \ldots, 5) $ & N & \cite{miglierina2008box} \\
		MMR5b & 100 & 2 & $ (-5, \ldots, -5) $ & $ (5, \ldots, 5) $ & N & \cite{miglierina2008box} \\
		MMR5c & 500 & 2 & $ (-5, \ldots, -5) $ & $ (5, \ldots, 5) $ & N & \cite{miglierina2008box} \\
		QV1a & 50 & 2 & $ (-5.12, \ldots, -5.12) $ & $ (5.12, \ldots, 5.12) $ & N & \cite{huband2006review} \\
		QV1b & 100 & 2 & $ (-5.12, \ldots, -5.12) $ & $ (5.12, \ldots, 5.12) $ & N & \cite{huband2006review} \\
		QV1c & 500 & 2 & $ (-5.12, \ldots, -5.12) $ & $ (5.12, \ldots, 5.12) $ & N & \cite{huband2006review} \\
		ZLT1a & 50 & 3 & $ (-1000, \ldots, -1000) $ & $ (1000, \ldots, 1000) $ & Y & \cite{huband2006review} \\
		ZLT1b & 100 & 3 & $ (-1000, \ldots, -1000) $ & $ (1000, \ldots, 1000) $ & Y & \cite{huband2006review} \\
		ZLT1c & 500 & 3 & $ (-1000, \ldots, -1000) $ & $ (1000, \ldots, 1000) $ & Y & \cite{huband2006review} \\
		\bottomrule
	\end{tabular}
	
\end{table}

\begin{table}[H]
	\centering
	\caption{The description of  higher-dimensional test problems used in numerical experiments.}
	\label{tab2:problem_description}
	\begin{tabular}{@{}l c c l l c l@{}}
		\toprule
		\textbf{Problem} & \textbf{n} & \textbf{m} & \textbf{\(x_L\)} & \textbf{\(x_U\)} & \textbf{Convex} & \textbf{References} \\
		\midrule
		JOS1d & 1000 & 2 & $(-2, \ldots, -2)$ & $(2, \ldots, 2)$ & Y & \cite{jin2001dynamic} \\
		JOS1e & 2000 & 2 & $(-2, \ldots, -2)$ & $(2, \ldots, 2)$ & Y & \cite{jin2001dynamic} \\
		JOS1f & 5000 & 2 & $(-2, \ldots, -2)$ & $(2, \ldots, 2)$ & Y & \cite{jin2001dynamic} \\
		JOS1g & 10000 & 2 & $(-2, \ldots, -2)$ & $(2, \ldots, 2)$ & Y & \cite{jin2001dynamic} \\
		TOI4d & 1000 & 2 & $(-2, \ldots, -2)$ & $(3, \ldots, 3)$ & Y & \cite{yin2025multiobjective} \\
		TOI4e & 2000 & 2 & $(-2, \ldots, -2)$ & $(3, \ldots, 3)$ & Y & \cite{yin2025multiobjective} \\
		TOI4f & 5000 & 2 & $(-2, \ldots, -2)$ & $(3, \ldots, 3)$ & Y & \cite{yin2025multiobjective} \\
		TOI4g & 10000 & 2 & $(-2, \ldots, -2)$ & $(3, \ldots, 3)$ & Y & \cite{yin2025multiobjective} \\
		M-MAN1d & 1000 & 2 & $ (-10, \ldots, -10) $ & $ (10, \ldots, 10) $ & Y & \cite{lapucci2023limited} \\
		M-MAN1e & 2000 & 2 & $ (-10, \ldots, -10) $ & $ (10, \ldots, 10) $ & Y &
		 \cite{lapucci2023limited} \\
		 M-MAN1f & 5000 & 2 & $ (-10, \ldots, -10) $ & $ (10, \ldots, 10) $ & Y &
		 \cite{lapucci2023limited} \\
		MMR5d & 1000 & 2 & $ (-5, \ldots, -5) $ & $ (5, \ldots, 5) $ & N & \cite{miglierina2008box} \\
		MMR5e & 2000 & 2 & $ (-5, \ldots, -5) $ & $ (5, \ldots, 5) $ & N & \cite{miglierina2008box} \\
		MMR5f & 5000 & 2 & $ (-5, \ldots, -5) $ & $ (5, \ldots, 5) $ & N & \cite{miglierina2008box} \\
		QV1d & 1000 & 2 & $ (-5.12, \ldots, -5.12) $ & $ (5.12, \ldots, 5.12) $ & N & \cite{huband2006review} \\
		QV1e & 2000 & 2 & $ (-5.12, \ldots, -5.12) $ & $ (5.12, \ldots, 5.12) $ & N & \cite{huband2006review} \\
		QV1f & 5000 & 2 & $ (-5.12, \ldots, -5.12) $ & $ (5.12, \ldots, 5.12) $ & N & \cite{huband2006review} \\
		ZLT1d & 1000 & 3 & $ (-1000, \ldots, -1000) $ & $ (1000, \ldots, 1000) $ & Y & \cite{huband2006review} \\
		ZLT1e & 2000 & 3 & $ (-1000, \ldots, -1000) $ & $ (1000, \ldots, 1000) $ & Y & \cite{huband2006review} \\
		ZLT1f & 5000 & 3 & $ (-1000, \ldots, -1000) $ & $ (1000, \ldots, 1000) $ & Y & \cite{huband2006review} \\
		ZLT1g & 10000 & 3 & $ (-1000, \ldots, -1000) $ & $ (1000, \ldots, 1000) $ & Y & \cite{huband2006review} \\
		\bottomrule
	\end{tabular}
\end{table}

\begin{landscape}  
	\begin{table}[p]  
		\centering
		\caption{Number of average CPU time (time(ms)), average iterations (iter), number of average function evaluations (feval), and number of failure points (NF) of M-BFGS, BB-DQN, and D-QN.}
		\label{tab1:comparison_landscape}
		\begin{tabular}{lcccccccccccc}
			\toprule
			\multirow{2}{*}{Problem} & \multicolumn{4}{c}{M-BFGS} & \multicolumn{4}{c}{BB-DQN} & \multicolumn{4}{c}{D-QN} \\
			\cmidrule(lr){2-5} \cmidrule(lr){6-9} \cmidrule(lr){10-13}
			& time & iter & feval & NF & time & iter & feval & NF & time & iter & feval & NF \\
			\midrule
			SLCDT1 & 0.15 & \textbf{4.27} & \textbf{11.42} & 0 & \textbf{0.13} & 4.70 & 12.10 & 0 & 0.86 & 23.25 & 90.95 & 0 \\
			PNR    & 0.36 & 9.69 & 36.03 & 0 & \textbf{0.30} & 8.73 & 31.56 & 0 & 0.32 & \textbf{8.21} & \textbf{28.42} & 0 \\
			MOP2   & 0.24 & 4.77 & \textbf{17.34} & 0 & \textbf{0.18} & \textbf{4.74} & 17.44 & 0 & 0.24 & 5.48 & 19.89 & 0 \\
			MOP5   & 7.40 & 7.65 & 27.83 & 1 & \textbf{5.63} & 7.42 & 25.62 & 0 & 6.14 & \textbf{7.37} & \textbf{24.76} & 0 \\
			MOP7   & 0.07 & 2.00 & 3.00 & 0 & 0.07 & 2.00 & 3.00 & 0 & 0.07 & 2.00 & 3.00 & 0 \\
			Far1   & \textbf{32.78} & 2.67 & \textbf{6.42} & 0 & 33.40 & \textbf{2.66} & 6.47 & 0 & 33.86 & \textbf{2.66} & 6.49 & 0 \\
			KW2    & 1.44 & 11.95 & 54.33 & 8 & 0.98 & 10.92 & 44.67 & 0 & \textbf{0.55} & \textbf{10.36} & \textbf{34.32} & 0 \\
			FF1    & 0.34 & \textbf{7.95} & \textbf{31.06} & 0 & \textbf{0.3} & 10.51 & 43.24 & 0 & 0.36 & 9.61 & 33.70 & 0 \\
			Deb    & 4.48 & 94.89 & 754.75 & 11 & \textbf{2.46} & \textbf{68.34} & \textbf{509.97} & 9 & 7.82 & 131.67 & 806.48 & 7 \\
			DD     & 6.13 & 74.39 & 221.40 & 7 & 5.72 & 73.28 & \textbf{217.93} & 7 & \textbf{5.25} & \textbf{65.41} & 255.65 & 6 \\
			BK1    & 0.07 & 2.00 & 4.00 & 0 & \textbf{0.06} & 2.00 & 4.00 & 0 & 0.08 & 2.00 & 4.00 & 0 \\
			WIT    & 0.34 & 8.96 & 31.35 & 0 & \textbf{0.25} & \textbf{7.91} & \textbf{27.06} & 0 & 0.34 & 8.68 & 27.51 & 0 \\
			NT2a   & 0.34 & \textbf{5.15} & \textbf{14.45} & 0 & 0.34 & 5.25 & 15.82 & 0 & 0.90 & 12.41 & 36.41 & 0 \\
			NT2b   & 1.27 & \textbf{5.79} & \textbf{21.11} & 0 & \textbf{0.81} & 5.91 & 23.45 & 0 & 2.63 & 22.14 & 71.81 & 0 \\
			NT2c   & 2.17 & \textbf{5.28} & \textbf{22.79} & 0 & \textbf{1.28} & 5.86 & 26.78 & 0 & 7.34 & 24.83 & 85.30 & 0 \\
			ZKG7   & 0.27 & 1.42 & 3.61 & 3 & \textbf{0.14} & \textbf{1.38} & \textbf{3.34} & 3 & 14.33 & 30.99 & 90.05 & 3 \\
			MHHM1  & 0.05 & 1.9 & 3.6 & 0 & \textbf{0.04} & 1.9 & 3.6 & 0 & 0.04 & 1.9 & 3.6 & 0 \\
			MHHM2  & 259.43 & \textbf{142.23} & 6022.84 & 35 & \textbf{234.52} & 142.38 & \textbf{5819.34} & 34 & 673.51 & 341.66 & 16000.56 & 34 \\
			ZLTa  & 1.42  & 2.00 & 4.00 & 0 & \textbf{1.34}  & 2.00 & 4.00 & 0 & 1.40  & 2.00 & 4.00 & 0 \\
			ZLTb  & 1.29  & 2.00 & 4.00 & 0 & \textbf{1.24}  & 2.00 & 4.00 & 0 & 1.26  & 2.00 & 4.00 & 0 \\
			ZLTc  & 1.26  & 2.00 & 4.00 & 0 & \textbf{1.15}  & 2.00 & 4.00 & 0 & 1.44  & 2.00 & 4.00 & 0 \\
			\bottomrule
		\end{tabular}
	\end{table}
\end{landscape}

\begin{landscape}  
	\begin{table}[p]  
		\caption*{continue:}
		\centering
		\begin{tabular}{lcccccccccccc}
			\toprule
			\multirow{2}{*}{Problem} & \multicolumn{4}{c}{M-BFGS} & \multicolumn{4}{c}{BB-DQN} & \multicolumn{4}{c}{D-QN} \\
			\cmidrule(lr){2-5} \cmidrule(lr){6-9} \cmidrule(lr){10-13}
			& time & iter & feval & NF & time & iter & feval & NF & time & iter & feval & NF \\
			\midrule
			ZLTd  & 1.46  & 2.00 & 4.00 & 0 & \textbf{1.43}  & 2.00 & 4.00 & 0 & 1.70  & 2.00 & 4.00 & 0 \\
			TE8a   & 22.87 & 13.05 & 460.16 & 9 & \textbf{22.28} & \textbf{12.74} & \textbf{377.87} & 8 & 69.92 & 33.12 & 1117.48 & 4 \\
			TE8b   & 8.73 & 5.72 & 167.31 & 3 & \textbf{0.89} & \textbf{2.34} & \textbf{10.97} & 3 & 66.27 & 31.97 & 1140.37 & 3 \\
			TE8c   & 25.81 & 12.28 & 455.68 & 2 & \textbf{1.09} & \textbf{2.24} & \textbf{7.11} & 2 & 44.78 & 21.98 & 591.84 & 2 \\
			JOS1a & 6.93 & 7.16 & 32.38 & 0 & \textbf{1.40} & \textbf{3.00} & \textbf{10.00} & 0 & 11.91 & 17.95 & 103.79 & 0 \\
			JOS1b & 8.60 & 7.37 & 36.67 & 0 & \textbf{1.77} & \textbf{3.00} & \textbf{11.00} & 0 & 9.74 & 9.50 & 71.82 & 0 \\
			JOS1c & 58.44 & 4.87 & 25.86 & 0 & \textbf{9.41} & \textbf{3.00} & \textbf{12.00} & 0 & 12.61 & 3.00 & 18.00 & 0 \\
			TOI4a & 3.67 & 11.75 & 42.59 & 0 & \textbf{1.34} & \textbf{6.26} & \textbf{16.24} & 0 & 2.60 & 14.47 & 40.62 & 0 \\
			TOI4b & 6.98 & 12.81 & 48.31 & 0 & \textbf{1.48} & \textbf{6.46} & \textbf{17.12} & 0 & 9.22 & 20.09 & 57.32 & 0 \\
			TOI4c & 232.38 & 18.08 & 77.23 & 0 & \textbf{5.04} & \textbf{5.15} & \textbf{12.49} & 0 & 190.17 & 40.66 & 118.98 & 0 \\
			M-MAN1a & 18.05 & 46.25 & 179.48 & 0 & \textbf{8.49} & \textbf{28.70} & \textbf{120.46} & 0 & 10.24 & 30.39 & 171.67 & 0 \\
			M-MAN1b & 63.54 & 86.65 & 380.01 & 0 & \textbf{13.33} & \textbf{45.42} & \textbf{193.19} & 0 & 39.79 & 58.87 & 363.60 & 0 \\
			M-MAN1c & 7314.67 & 502.44 & 2436.07 & 0 & \textbf{382.53} & \textbf{115.26} & \textbf{494.55} & 0 & 2805.84 & 464.10 & 3344.88 & 0 \\
			MMR5a & 121.70 & 286.15 & 1963.73 & 0 & 44.70 & \textbf{159.93} & 1276.50 & 0 & 57.11 & 203.50 & \textbf{1008.61} & 0 \\
			MMR5b & 204.87 & 264.70 & 1793.40 & 0 & \textbf{54.97} & \textbf{134.71} & 1033.13 & 0 & 107.99 & 156.32 & \textbf{924.07} & 0 \\
			MMR5c & 2420.68 & 164.95 & 1121.04 & 0 & \textbf{323.85} & 86.87 & \textbf{588.99} & 0 & 366.07 & \textbf{62.23} & 619.35 & 0 \\
			QV1a & 107.55 & 269.65 & 1814.95 & 0 & \textbf{43.94} & \textbf{154.49} & 1226.65 & 0 & 52.82 & 194.76 & \textbf{975.76} & 0 \\
			QV1b & 215.73 & 262.17 & 1756.66 & 0 & \textbf{56.07} & \textbf{132.41} & 1011.41 & 0 & 116.59 & 161.59 & \textbf{948.97} & 0 \\
			QV1c & 1948.21 & 130.45 & 871.10 & 0 & \textbf{282.96} & 85.01 & 579.95 & 0 & 291.84 & \textbf{45.31} & \textbf{394.17} & 0 \\
			ZLT1a & 1.39 & 2.00 & 4.00 & 0 & 0.97 & 2.00 & 4.00 & 0 & 0.86 & 2.00 & 4.00 & 0 \\
			ZLT1b & 1.67 & 2.00 & 4.00 & 0 & \textbf{0.97} & 2.00 & 4.00 & 0 & 1.30 & 2.00 & 4.00 & 0 \\
			ZLT1c & 14.56 & 2.00 & 4.00 & 0 & \textbf{5.39} & 2.00 & 4.00 & 0 & 11.64 & 2.00 & 4.00 & 0 \\
			\bottomrule
		\end{tabular}
	\end{table}
\end{landscape}

\begin{landscape}  
	\begin{table}[p]  
		\centering
		\caption{Number of average CPU time (time(ms)), average iterations (iter), number of average function evaluations (feval), and number of failure points (NF) of M-BFGS, M-BB, and D-QN.}
		\label{tab3:comparison_landscape}
		\begin{tabular}{lcccccccccccc}
			\toprule
			\multirow{2}{*}{Problem} & \multicolumn{4}{c}{M-BFGS} & \multicolumn{4}{c}{BB-DQN} & \multicolumn{4}{c}{D-QN} \\
			\cmidrule(lr){2-5} \cmidrule(lr){6-9} \cmidrule(lr){10-13}
			& time & iter & feval & NF & time & iter & feval & NF & time & iter & feval & NF \\
			\midrule
			JOS1d & 286.39 & 4.46 & 28.41 & 0 & \textbf{25.94} & \textbf{3.00} & \textbf{13.00} & 0 & 97.77 & 6.00 & 50.00 & 0 \\
			JOS1e & 3093.38 & 6.78 & 46.93 & 0 & \textbf{86.44} & \textbf{3.00} & \textbf{14.00} & 0 & 707.39 & 11.00 & 110.00 & 0 \\
			JOS1f & 36587.73 & 6.22 & 47.93 & 0 & \textbf{561.63} & \textbf{3.00} & \textbf{15.00} & 0 & 3949.47 & 11.00 & 120.00 & 0 \\
			JOS1g & 334930.06 & 7.57 & 63.51 & 0 & \textbf{2521.35} & \textbf{3.00} & \textbf{16.00} & 0 & 104830.2 & 69.0 & 883.6 & 0 \\
			TOI4d & 1473.33 & 20.07 & 88.74 & 0 & \textbf{23.67} & \textbf{5.09} & \textbf{12.27} & 0 & 983.26 & 56.27 & 165.80 & 0 \\
			TOI4e & 10347.36 & 22.52 & 103.31 & 0 & \textbf{85.01} & \textbf{5.18} & \textbf{12.53} & 0 & 5036.19 & 77.76 & 230.28 & 0 \\
			TOI4f & 399900.08 & 26.63 & 137.73 & 1 & \textbf{731.54} & \textbf{7.10} & \textbf{19.90} & 0 & 42917.75 & 118.64 & 352.92 & 0 \\
			TOI4g & F & F & F & F & \textbf{2917.31} & \textbf{7.22} & \textbf{20.41} & 0 & 251654.30 & 163.97 & 488.91 & 0 \\
			M-MAN1d & 85460.87 & 1090.43 & 5350.82 & 0 & \textbf{1656.84} & \textbf{172.07} & \textbf{746.53} & 0 & 16672.19 & 808.46 & 5994.60 & 0 \\
			M-MAN1e & 950696.76 & 1999.92 & 10072.90 & 198 & \textbf{8257.52} & \textbf{263.59} & \textbf{1154.16} & 0 & 115615.42 & 1667.24 & 12879.54 & 1 \\
			MMR5d & 21763.64 & 278.14 & 1958.61 & 0 & \textbf{1287.94} & \textbf{125.18} & \textbf{917.53} & 0 & 4097.67 & 199.34 & 2476.64 & 2 \\
			MMR5e & 140782.58 & 298.76 & 2097.31 & 0 & \textbf{6199.99} & \textbf{185.99} & \textbf{1388.12} & 0 & 20781.26 & 273.44 & 6751.14 & 10 \\
			MMR5f & F & F & F & F & \textbf{788923.47} & 214.46 & 1615.01 & 0 & F & F & F & F \\
			QV1d & 22566.25 & 290.03 & 2006.56 & 0 & \textbf{1445.72} & \textbf{140.72} & \textbf{1035.13} & 0 & 3695.81 & 170.00 & 1991.63 & 2 \\
			QV1e & 127313.32 & 266.25 & 1797.62 & 0 & \textbf{2043.19} & \textbf{63.70} & \textbf{412.28} & 0 & 14493.26 & 198.23 & 4067.05 & 4 \\
			QV1f & F & F & F & 0 & \textbf{4432.07} & \textbf{43.00} & \textbf{248.90} & 0 & 98232.05 & 232.95 & 6508.30 & 14 \\
			ZLT1d & 86.64 & 2.00 & 4.00 & 0 & \textbf{17.79} & 2.00 & 4.00 & 0 & 31.13 & 2.00 & 4.00 & 0 \\
			ZLT1e & 494.12 & 2.00 & 4.00 & 0 & \textbf{41.19} & 2.00 & 4.00 & 0 & 88.13 & 2.00 & 4.00 & 0 \\
			ZLT2f & 7685.22 & 2.00 & 4.00 & 0 & \textbf{373.87} & 2.00 & 4.00 & 0 & 511.11 & 2.00 & 4.00 & 0 \\
			ZLT3g & 57942.47 & 2.00 & 4.00 & 0 & \textbf{1623.82} & 2.00 & 4.00 & 0 & 2121.53 & 2.00 & 4.00 & 0 \\
			\bottomrule
		\end{tabular}
	\end{table}
\end{landscape}

The experimental results in Table \ref{tab1:comparison_landscape} and \ref{tab3:comparison_landscape} demonstrate that the proposed BB-DQN method consistently outperforms or matches benchmark algorithms across most test instances, with its advantages becoming particularly pronounced on large-scale problems.

BB-DQN method exhibits superior efficiency in terms of CPU time for the vast majority of problems (Figure \ref{CPU}). While maintaining comparable runtimes on medium and small-scale instances (Table~\ref{tab1:comparison_landscape}), its decisive advantage emerges in high-dimensional scenarios (Table~\ref{tab3:comparison_landscape}). For instance, on the 10,000-variable problem \texttt{JOS1g}, BB-DQN method required only 2521.35 ms, representing a dramatic speedup compared to the 334,930.06 ms for M-BFGS method and the 104,830.2 ms for D-QN. Furthermore, BB-DQN method often converges in fewer iterations and with fewer function evaluations, indicating that its simplified structure does not compromise search efficacy.

Performance profile analyses based on the Purity, $\Gamma$ and $\Delta$ metrics confirm that BB-DQN method's computational gains are not achieved at the expense of solution quality. As shown in Figure~\ref{P} and \ref{GD}, BB-DQN method generates Pareto front approximations that are highly competitive with, and often superior to, those obtained by M-BFGS and D-QN method. It achieves high Purity values, indicating effective convergence to the true Pareto set, while maintaining low $\Gamma$ and $\Delta$ values, which demonstrates that the solution sets are well-distributed and uniformly spread across the front.

The visual comparisons clearly demonstrate that BB-DQN method is capable of generating satisfactory approximations of the Pareto front, solidifying its position as a balanced algorithm that excels in both computational efficiency and optimization quality. Figures~\ref{fig9:pareto1} and \ref{fig10:pareto1} visualize the results for representative convex problems, while Figures~\ref{fig1:pareto1} - \ref{fig11:pareto1} extend the comparison to nonconvex instances. Figure \ref{fig12:pareto1} shows the approximated nondominated frontiers for the MHHM2 problem with 2500 starting points.

\begin{figure}[H]
	\centering
	\begin{subfigure}[b]{0.3\textwidth}
		\centering
		\includegraphics[width=0.9\textwidth]{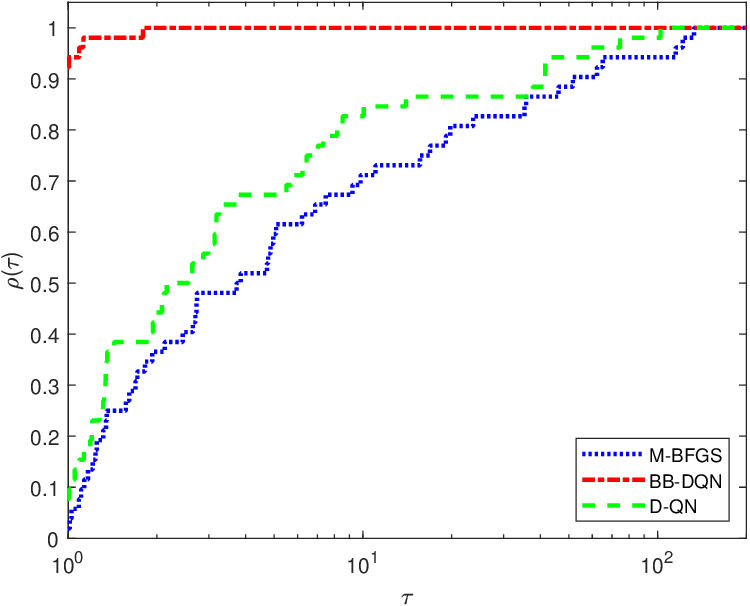}
		\caption{CPU time}
	\end{subfigure}
	\hfill
	\begin{subfigure}[b]{0.3\textwidth}
		\centering
		\includegraphics[width=0.9\textwidth]{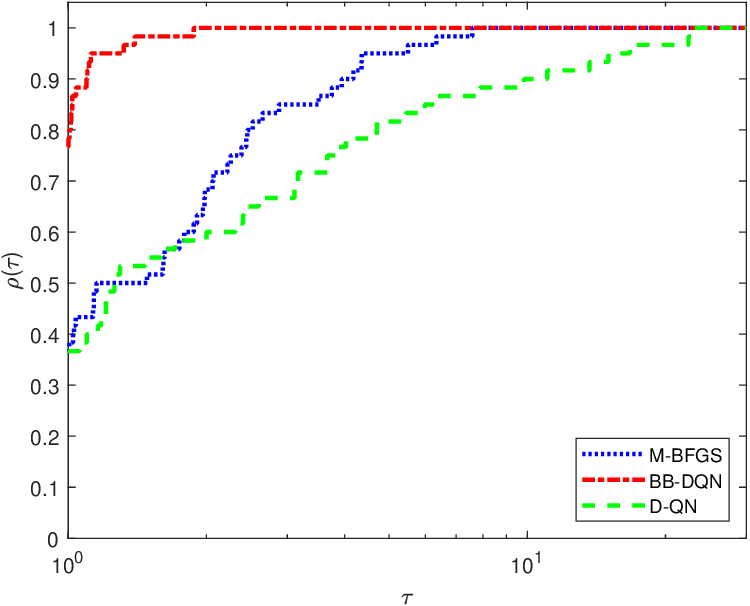}
		\caption{iter}
	\end{subfigure}
	\hfill
	\begin{subfigure}[b]{0.3\textwidth}
		\centering
		\includegraphics[width=0.9\textwidth]{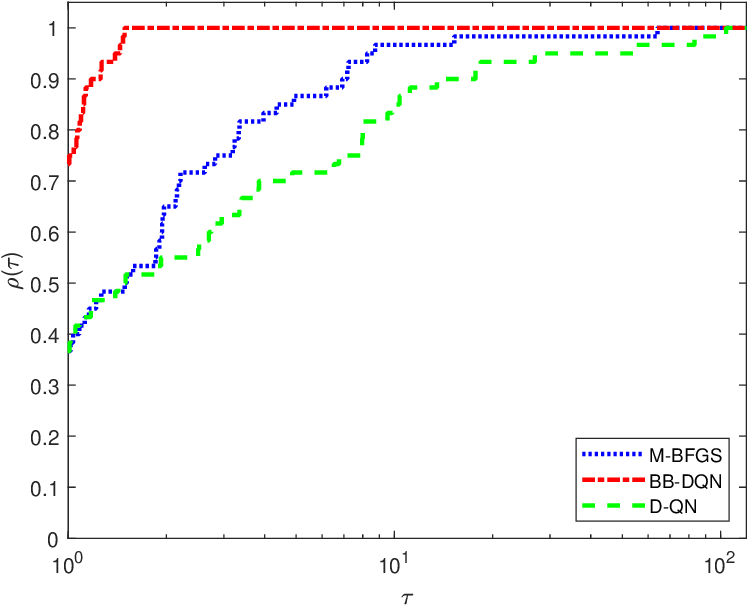}
		\caption{feval}
	\end{subfigure}
	\caption{Performance profiles with CPU time, iter and feval for each test problem.}
	\label{CPU}
\end{figure}

\begin{figure}[H]
	\centering
	\begin{subfigure}[b]{0.44\textwidth}
		\centering
		\includegraphics[width=0.9\textwidth]{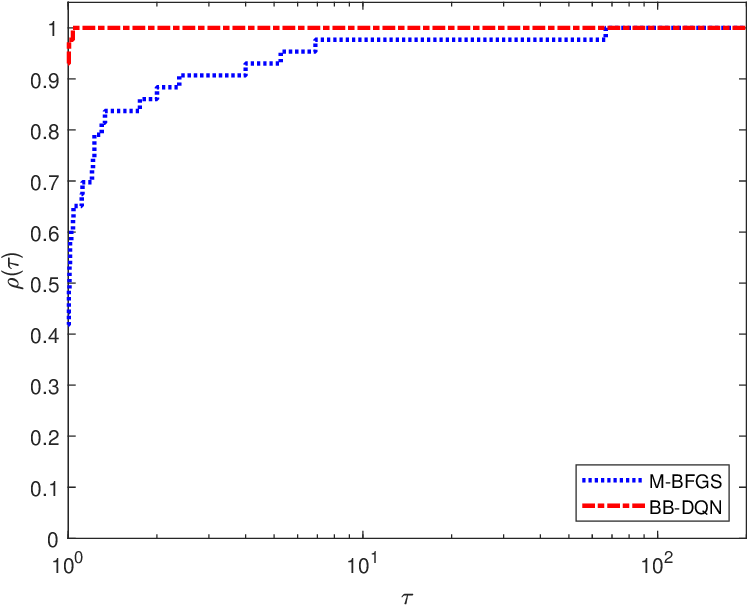}
		\label{fig1:CPU}
	\end{subfigure}
	\hfill
	\begin{subfigure}[b]{0.44\textwidth}
		\centering
		\includegraphics[width=0.9\textwidth]{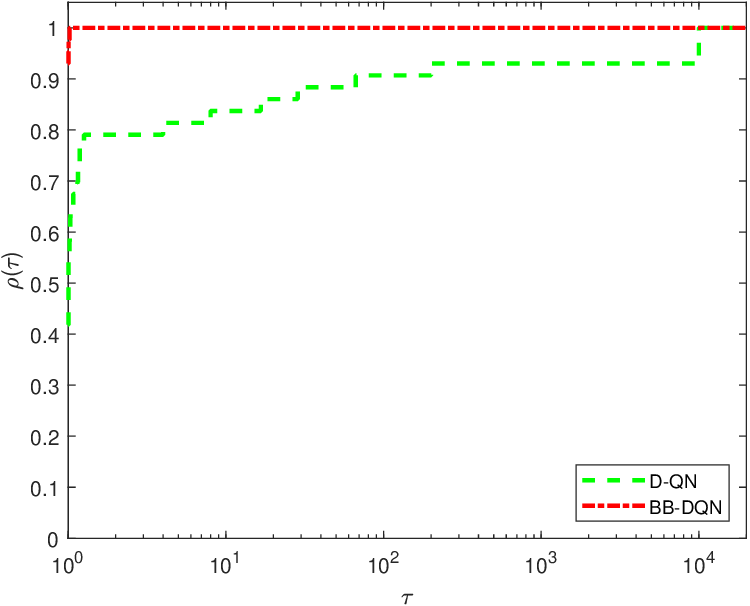}
		\label{fig1:P}
	\end{subfigure}
	\caption{Purity metric performance profiles.}
	\label{P}
\end{figure}

\begin{figure}[H]
	\centering
	\begin{subfigure}[b]{0.44\textwidth}
		\centering
		\includegraphics[width=0.9\textwidth]{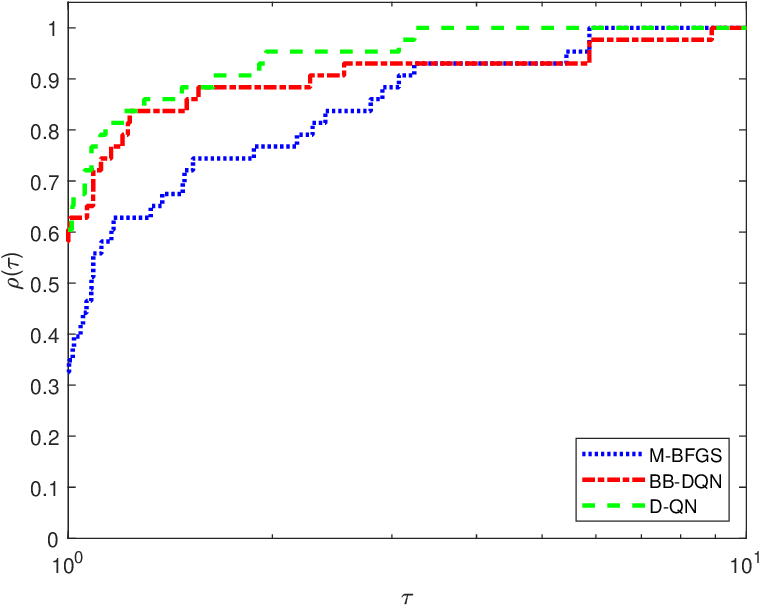}
		\caption{$\Gamma$ metric}
		\label{fig1:G}
	\end{subfigure}
	\hfill
	\begin{subfigure}[b]{0.44\textwidth}
		\centering
		\includegraphics[width=0.9\textwidth]{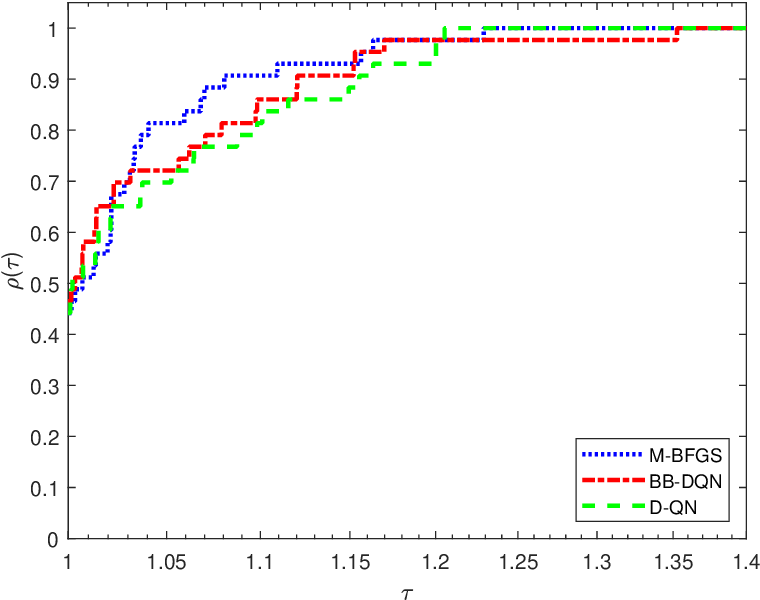}
		\caption{$\Delta$ metric}
		\label{fig1:D}
	\end{subfigure}
	\caption{Spread metric performance profiles: (a) $\Gamma$ metric; (b)$\Delta$ metric.}
	\label{GD}
\end{figure}

\begin{figure}[h!]
	\centering
	
	\begin{subfigure}[b]{0.3\textwidth}
		\centering
		\includegraphics[width=0.9\textwidth]{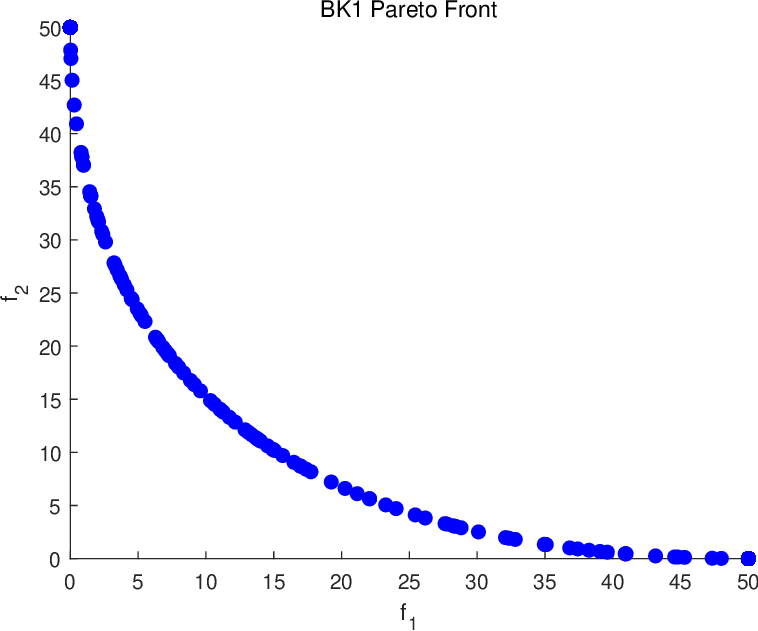}
		\caption{M-BFGS}
		\label{fig9:qnm}
	\end{subfigure}
	\hfill
	\begin{subfigure}[b]{0.3\textwidth}
		\centering
		\includegraphics[width=0.9\textwidth]{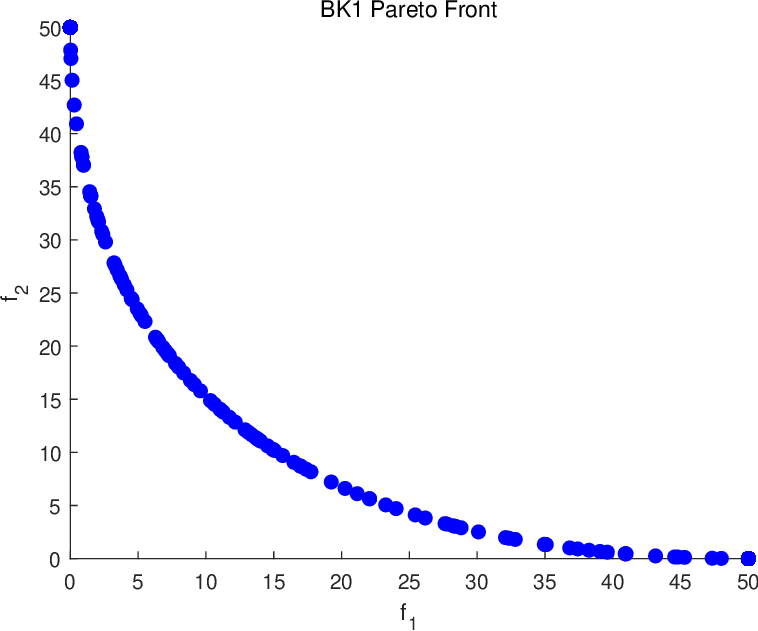}
		\caption{BB-DQN}
		\label{fig9:mbfgs}
	\end{subfigure}
	\hfill
	\begin{subfigure}[b]{0.3\textwidth}
		\centering
		\includegraphics[width=0.9\textwidth]{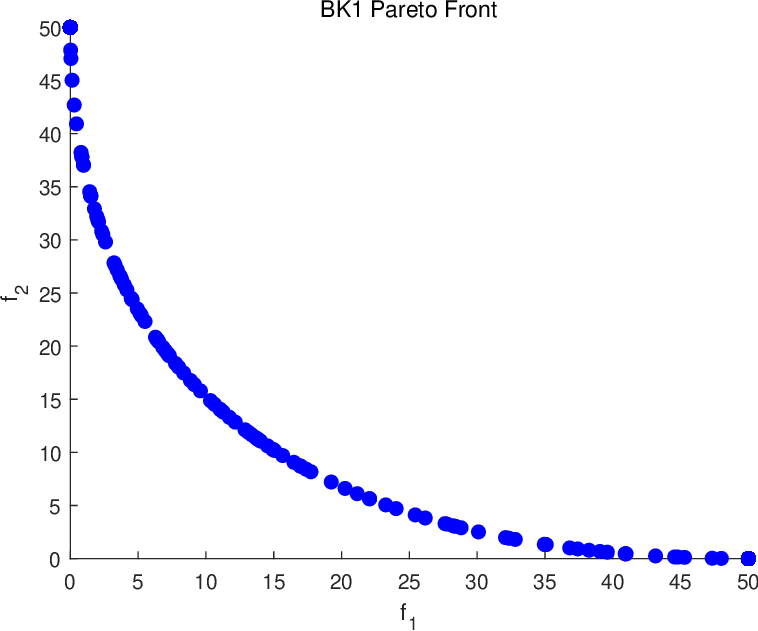}
		\caption{D-QN}
		\label{fig9:DQN}
	\end{subfigure}
	\caption{The approximated nondominated frontiers generated by M-BFGS, BB-DQN and D-QN for the BK1 problem}
	\label{fig9:pareto1}
\end{figure}

\begin{figure}[h!]
	\centering
	
	\begin{subfigure}[b]{0.3\textwidth}
		\centering
		\includegraphics[width=0.9\textwidth]{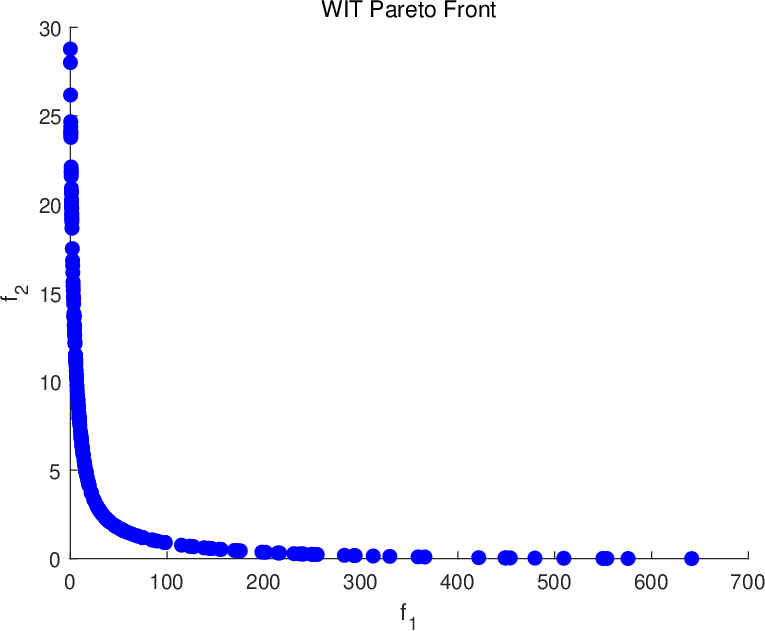}
		\caption{M-BFGS}
		\label{fig10:qnm}
	\end{subfigure}
	\hfill
	\begin{subfigure}[b]{0.3\textwidth}
		\centering
		\includegraphics[width=0.9\textwidth]{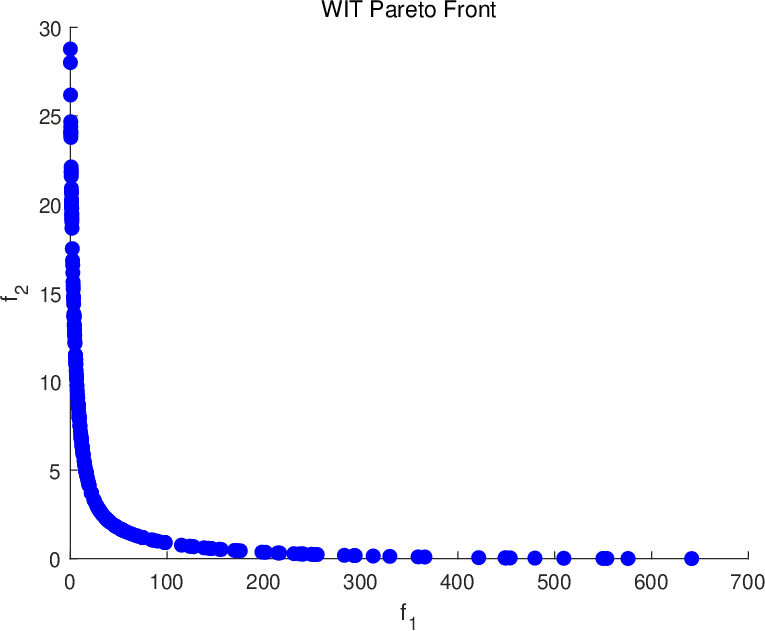}
		\caption{BB-DQN}
		\label{fig10:mbfgs}
	\end{subfigure}
	\hfill
	\begin{subfigure}[b]{0.3\textwidth}
		\centering
		\includegraphics[width=0.9\textwidth]{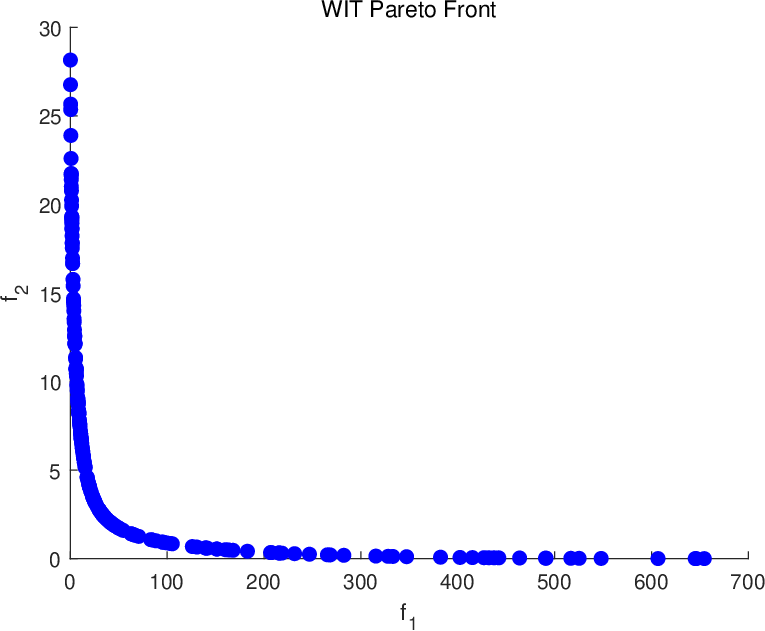}
		\caption{D-QN}
		\label{fig10:DQN}
	\end{subfigure}
	\caption{The approximated nondominated frontiers generated by M-BFGS, BB-DQN and D-QN for the WIT problem}
	\label{fig10:pareto1}
\end{figure}

\begin{figure}[h!]
	\centering
	\begin{subfigure}[b]{0.3\textwidth}
		\centering
		\includegraphics[width=0.9\textwidth]{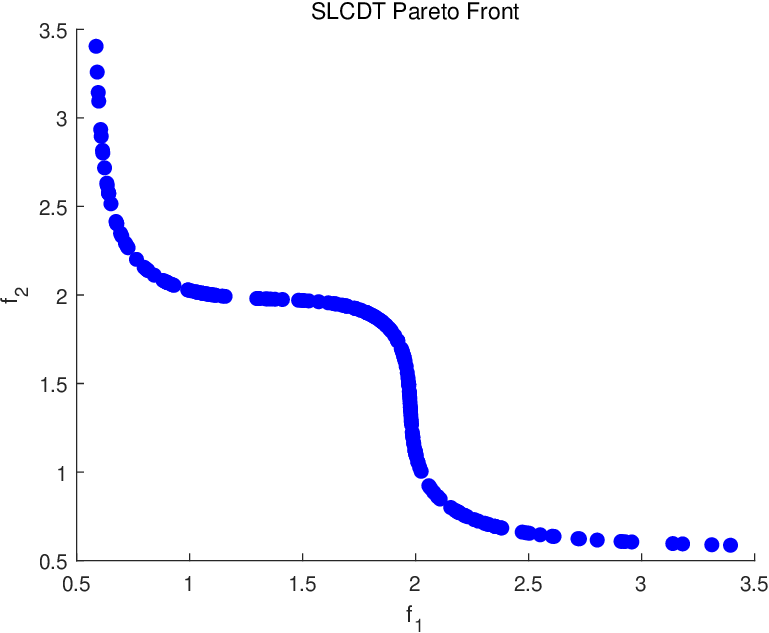}
		\caption{M-BFGS}
		\label{fig1:qnm}
	\end{subfigure}
	\hfill
	\begin{subfigure}[b]{0.3\textwidth}
		\centering
		\includegraphics[width=0.9\textwidth]{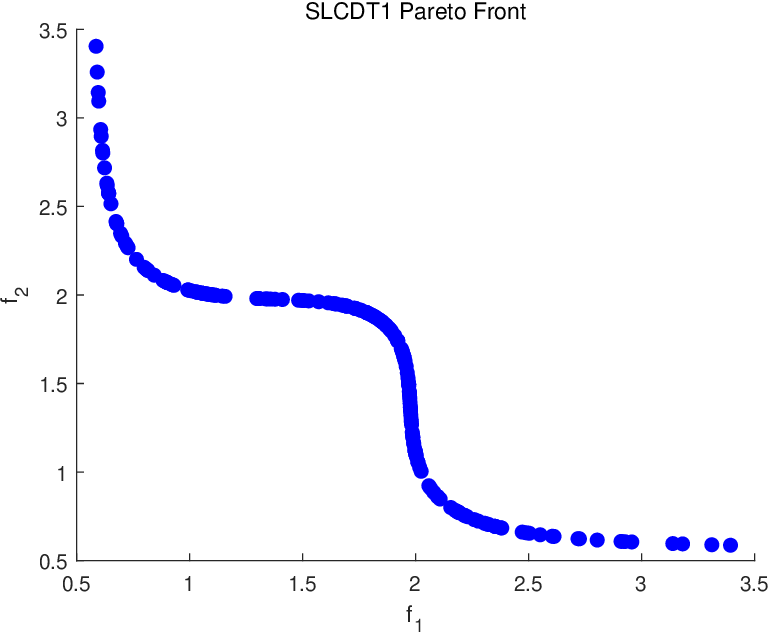}
		\caption{BB-DQN}
		\label{fig1:mbfgs}
	\end{subfigure}
	\hfill
	\begin{subfigure}[b]{0.3\textwidth}
		\centering
		\includegraphics[width=0.9\textwidth]{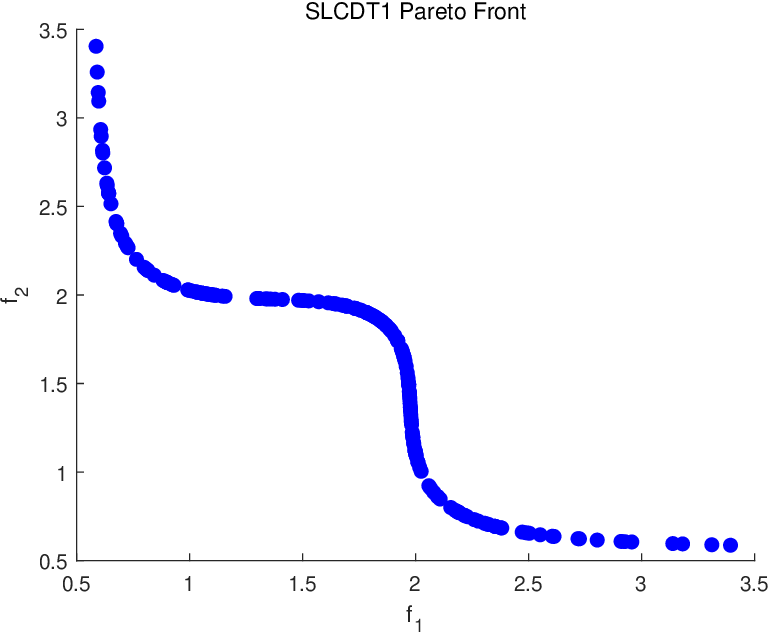}
		\caption{D-QN}
		\label{fig1:DQN}
	\end{subfigure}
	\caption{The approximated nondominated frontiers generated by M-BFGS, BB-DQN and D-QN for the SLCDT1 problem}
	\label{fig1:pareto1}
\end{figure}

\begin{figure}[H]
	\centering
	
	\begin{subfigure}[b]{0.3\textwidth}
		\centering
		\includegraphics[width=0.9\textwidth]{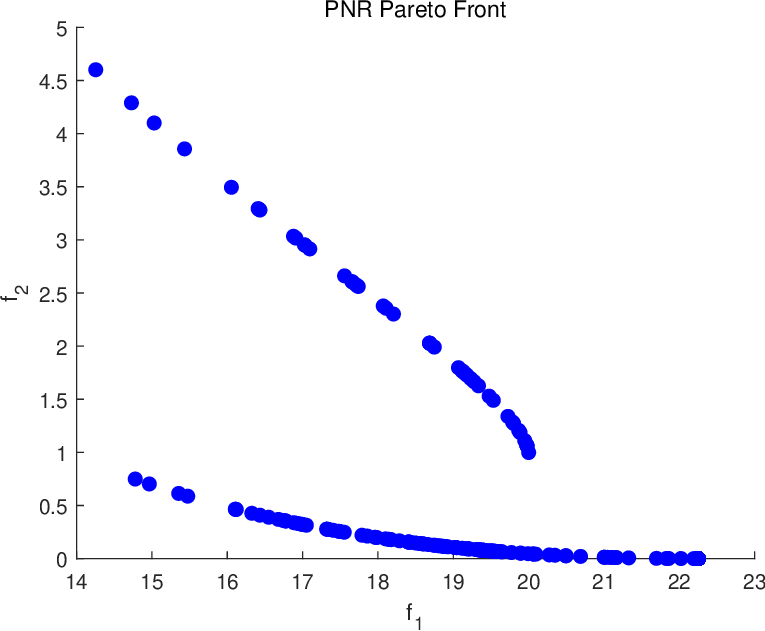}
		\caption{M-BFGS}
		\label{fig2:qnm}
	\end{subfigure}
	\hfill
	\begin{subfigure}[b]{0.3\textwidth}
		\centering
		\includegraphics[width=0.9\textwidth]{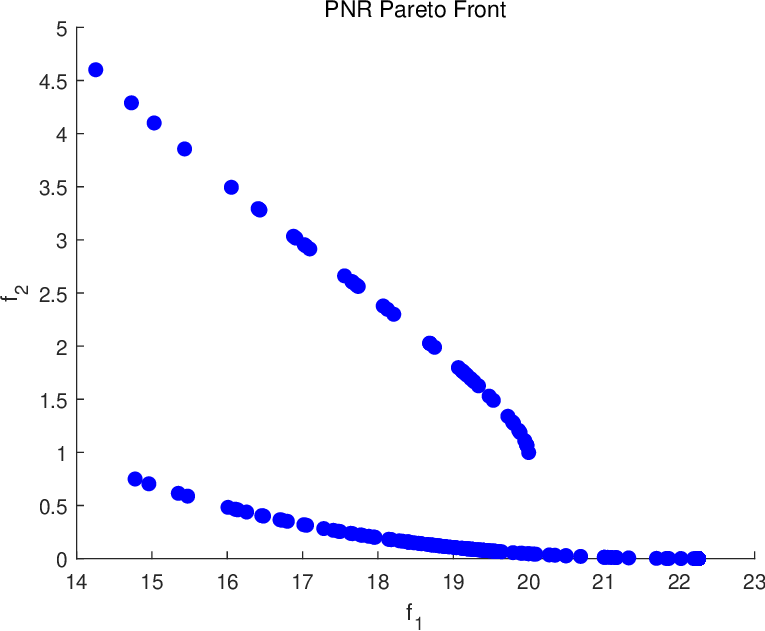}
		\caption{BB-DQN}
		\label{fig2:mbfgs}
	\end{subfigure}
	\hfill
	\begin{subfigure}[b]{0.3\textwidth}
		\centering
		\includegraphics[width=0.9\textwidth]{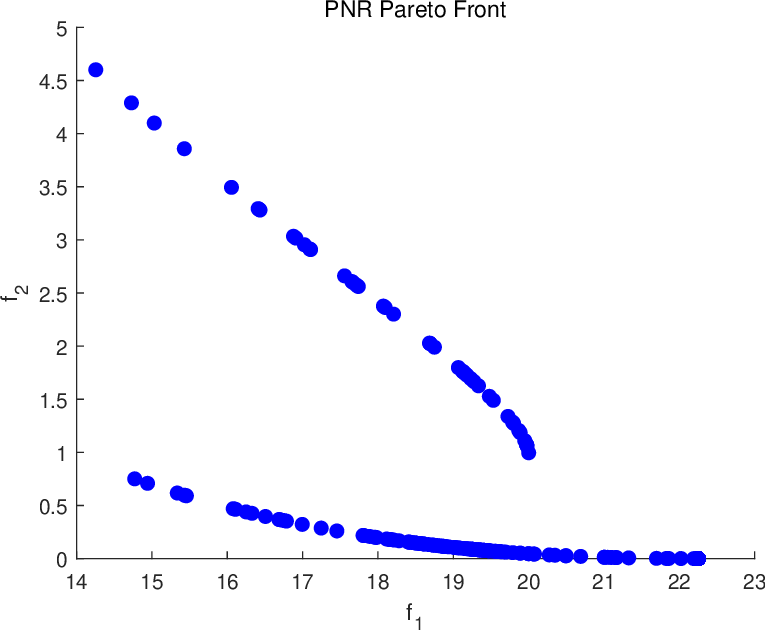}
		\caption{D-QN}
		\label{fig2:DQN}
	\end{subfigure}
	\caption{The approximated nondominated frontiers generated by M-BFGS, BB-DQN and D-QN for the PNR problem}
	\label{fig2:pareto1}
\end{figure}

\begin{figure}[h!]
	\centering
	
	\begin{subfigure}[b]{0.3\textwidth}
		\centering
		\includegraphics[width=0.9\textwidth]{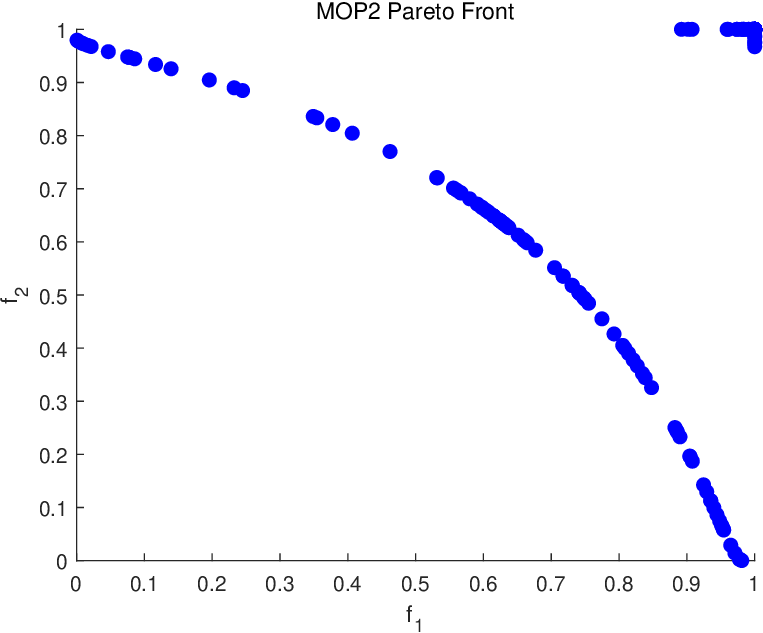}
		\caption{M-BFGS}
		\label{fig3:qnm}
	\end{subfigure}
	\hfill
	\begin{subfigure}[b]{0.3\textwidth}
		\centering
		\includegraphics[width=0.9\textwidth]{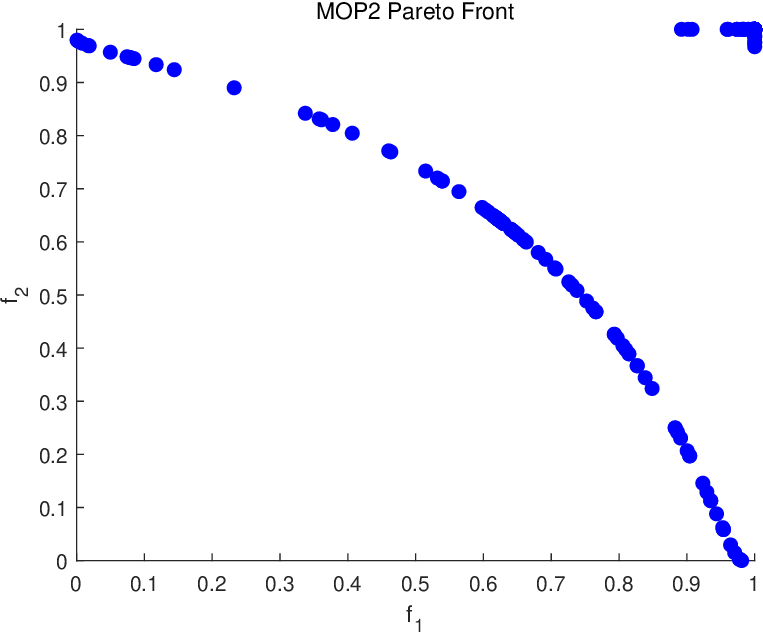}
		\caption{BB-DQN}
		\label{fig3:mbfgs}
	\end{subfigure}
	\hfill
	\begin{subfigure}[b]{0.3\textwidth}
		\centering
		\includegraphics[width=0.9\textwidth]{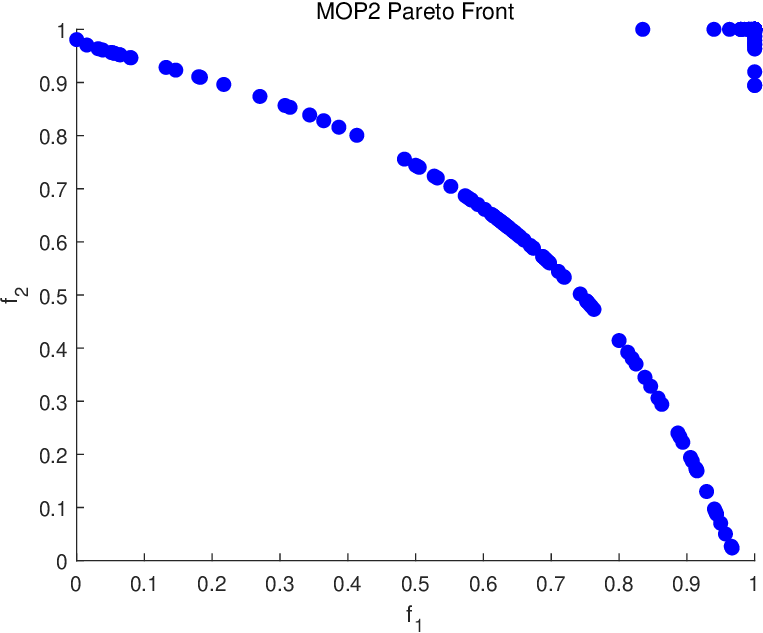}
		\caption{D-QN}
		\label{fig3:DQN}
	\end{subfigure}
	\caption{The approximated nondominated frontiers generated by M-BFGS, BB-DQN and D-QN for the MOP2 problem}
	\label{fig3:pareto1}
\end{figure}

\begin{figure}[h!]
	\centering
	
	\begin{subfigure}[b]{0.3\textwidth}
		\centering
		\includegraphics[width=0.9\textwidth]{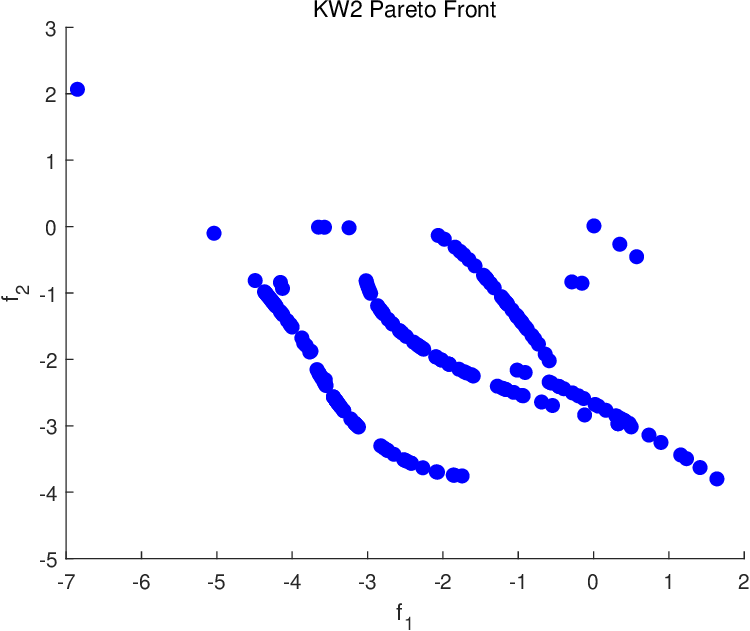}
		\caption{M-BFGS}
		\label{fig5:qnm}
	\end{subfigure}
	\hfill
	\begin{subfigure}[b]{0.3\textwidth}
		\centering
		\includegraphics[width=0.9\textwidth]{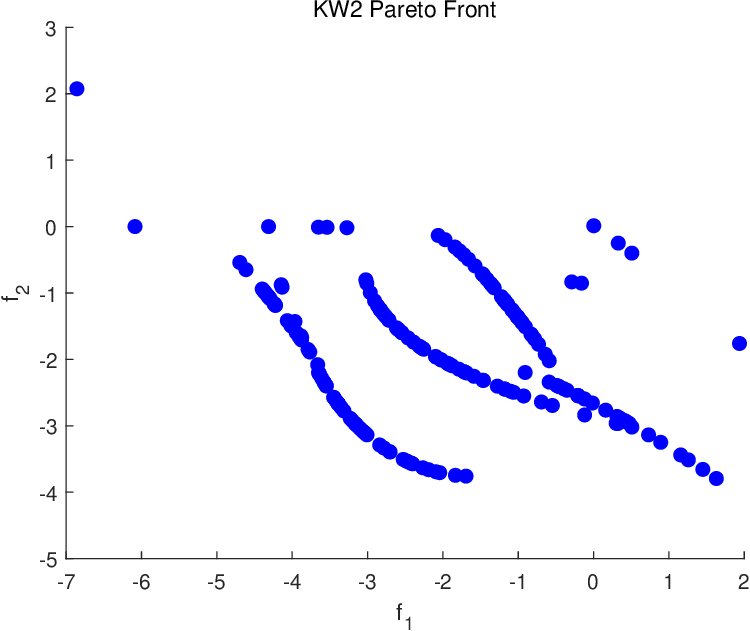}
		\caption{BB-DQN}
		\label{fig5:mbfgs}
	\end{subfigure}
	\hfill
	\begin{subfigure}[b]{0.3\textwidth}
		\centering
		\includegraphics[width=0.9\textwidth]{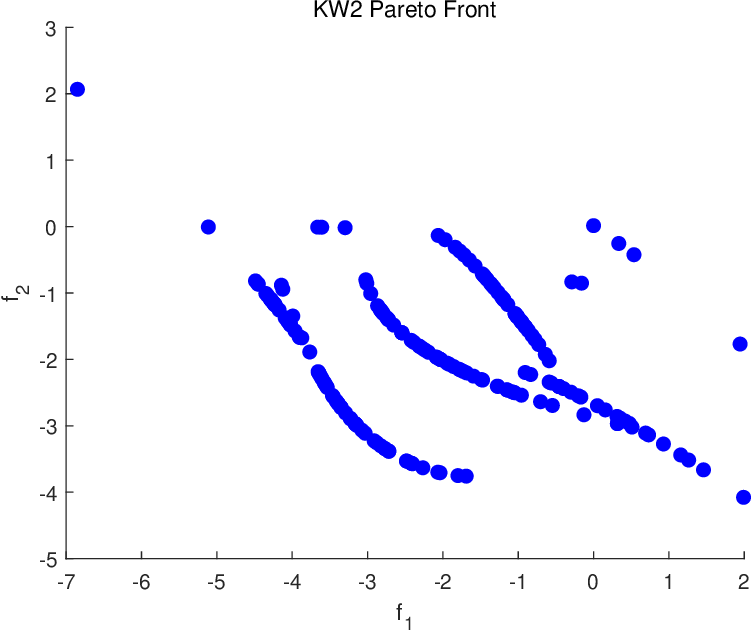}
		\caption{D-QN}
		\label{fig5:DQN}
	\end{subfigure}
	\caption{The approximated nondominated frontiers generated by M-BFGS, BB-DQN and D-QN for the KW2 problem}
	\label{fig5:pareto1}
\end{figure}

\begin{figure}[h!]
	\centering
	
	\begin{subfigure}[b]{0.3\textwidth}
		\centering
		\includegraphics[width=0.9\textwidth]{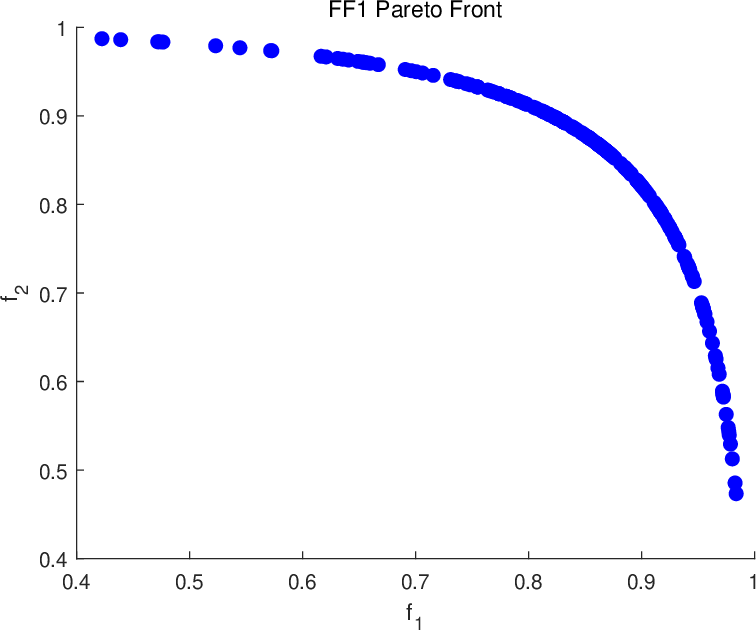}
		\caption{M-BFGS}
		\label{fig6:qnm}
	\end{subfigure}
	\hfill
	\begin{subfigure}[b]{0.3\textwidth}
		\centering
		\includegraphics[width=0.9\textwidth]{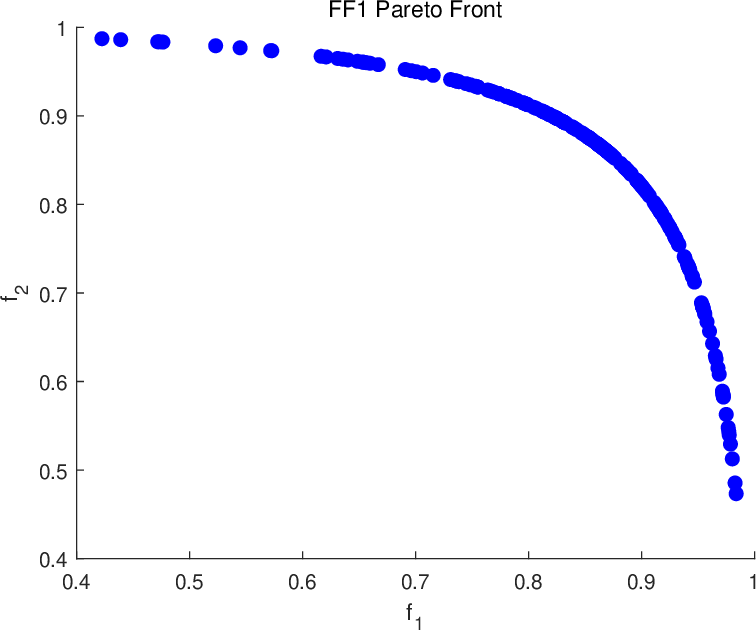}
		\caption{BB-DQN}
		\label{fig6:mbfgs}
	\end{subfigure}
	\hfill
	\begin{subfigure}[b]{0.3\textwidth}
		\centering
		\includegraphics[width=0.9\textwidth]{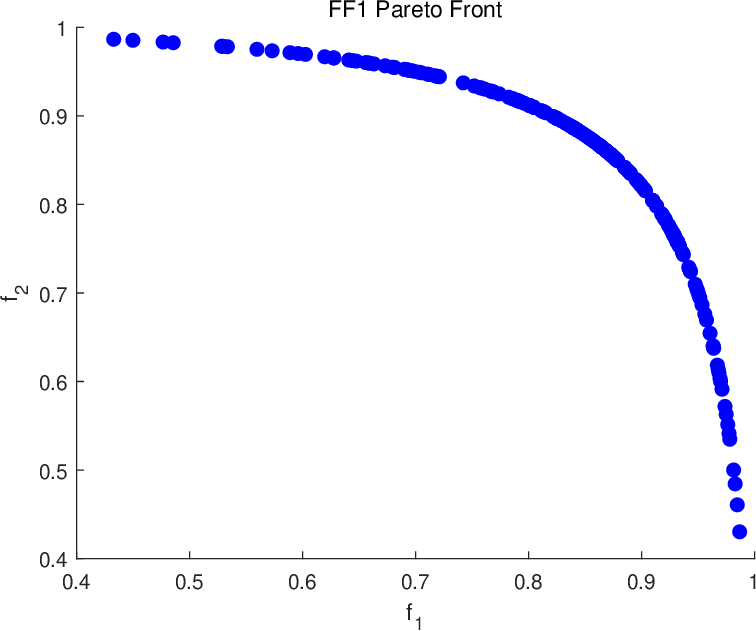}
		\caption{D-QN}
		\label{fig6:DQN}
	\end{subfigure}
	\caption{The approximated nondominated frontiers generated by M-BFGS, BB-DQN and D-QN for the FF1 problem}
	\label{fig6:pareto1}
\end{figure}

\begin{figure}[h!]
	\centering
	
	\begin{subfigure}[b]{0.3\textwidth}
		\centering
		\includegraphics[width=0.9\textwidth]{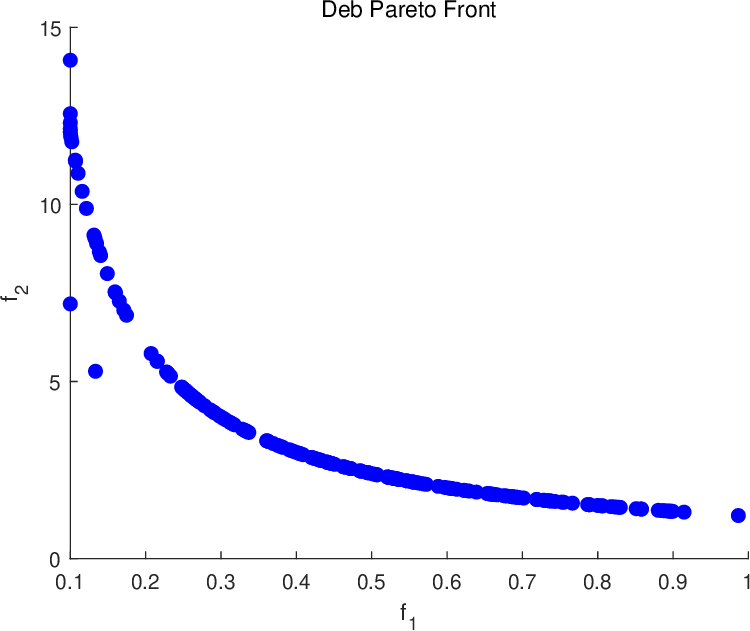}
		\caption{M-BFGS}
		\label{fig8:qnm}
	\end{subfigure}
	\hfill
	\begin{subfigure}[b]{0.3\textwidth}
		\centering
		\includegraphics[width=0.9\textwidth]{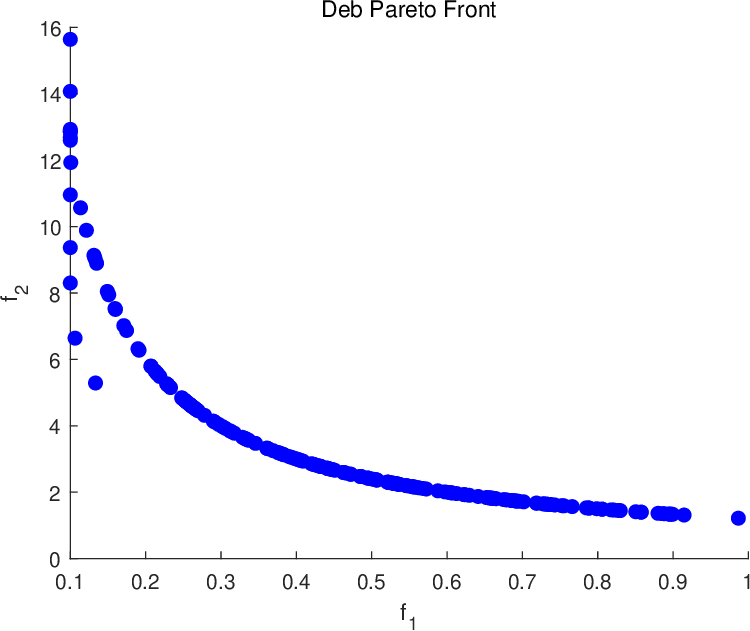}
		\caption{BB-DQN}
		\label{fig8:mbfgs}
	\end{subfigure}
	\hfill
	\begin{subfigure}[b]{0.3\textwidth}
		\centering
		\includegraphics[width=0.9\textwidth]{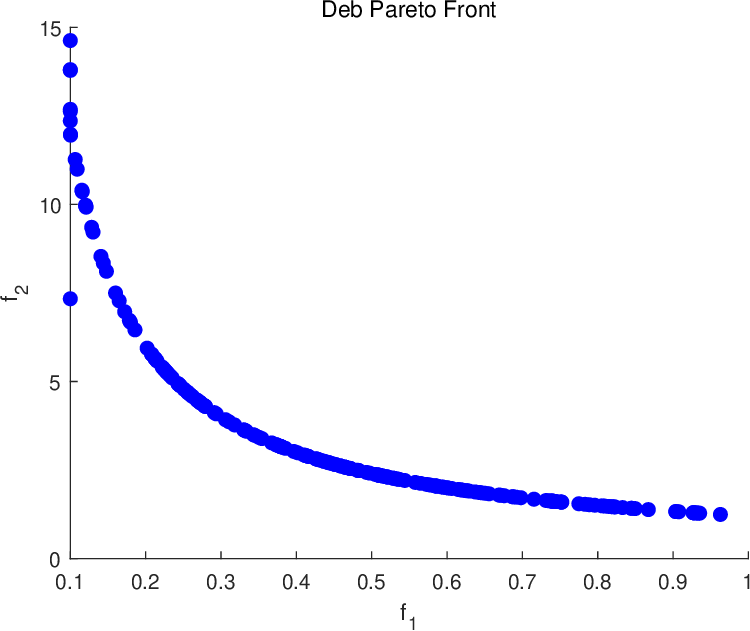}
		\caption{D-QN}
		\label{fig8:DQN}
	\end{subfigure}
	\caption{The approximated nondominated frontiers generated by M-BFGS, BB-DQN and D-QN for the Deb problem}
	\label{fig8:pareto1}
\end{figure}

\begin{figure}[h!]
	\centering
	\begin{subfigure}[b]{0.3\textwidth}
		\centering
		\includegraphics[width=0.9\textwidth]{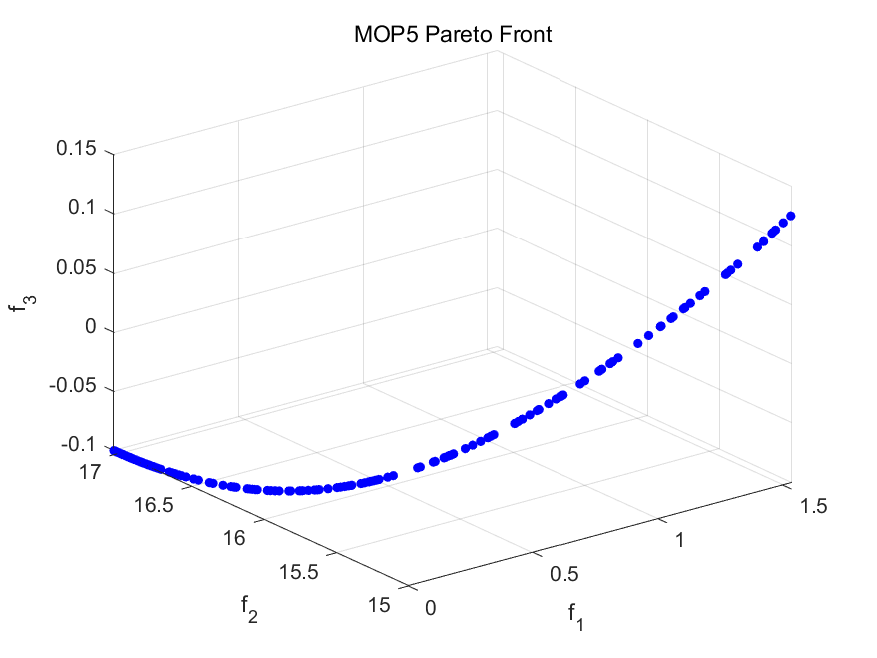}
		\caption{M-BFGS}
		\label{fig4:qnm}
	\end{subfigure}
	\hfill
	\begin{subfigure}[b]{0.3\textwidth}
		\centering
		\includegraphics[width=0.9\textwidth]{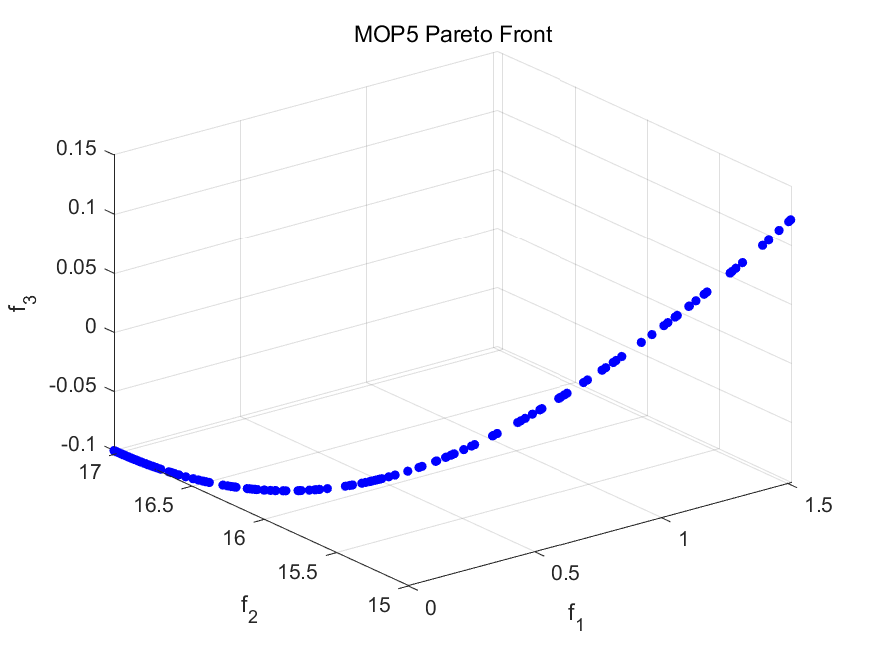}
		\caption{BB-DQN}
		\label{fig4:mbfgs}
	\end{subfigure}
	\hfill
	\begin{subfigure}[b]{0.3\textwidth}
		\centering
		\includegraphics[width=0.9\textwidth]{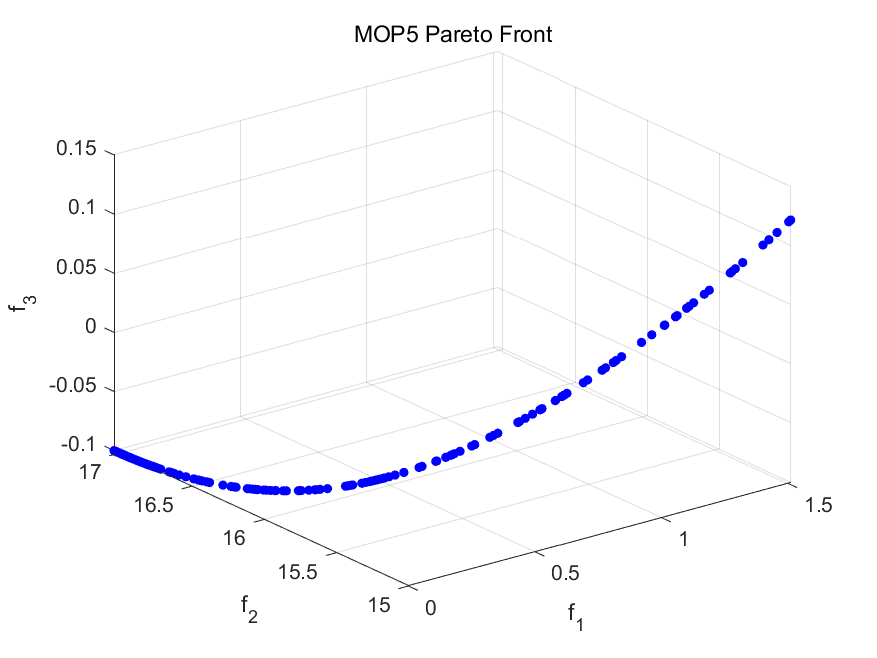}
		\caption{D-QN}
		\label{fig4:DQN}
	\end{subfigure}
	\caption{The approximated nondominated frontiers generated by M-BFGS, BB-DQN and D-QN for the MOP5 problem}
	\label{fig4:pareto1}
\end{figure}

\begin{figure}[h!]
	\centering
	
	\begin{subfigure}[b]{0.3\textwidth}
		\centering
		\includegraphics[width=0.9\textwidth]{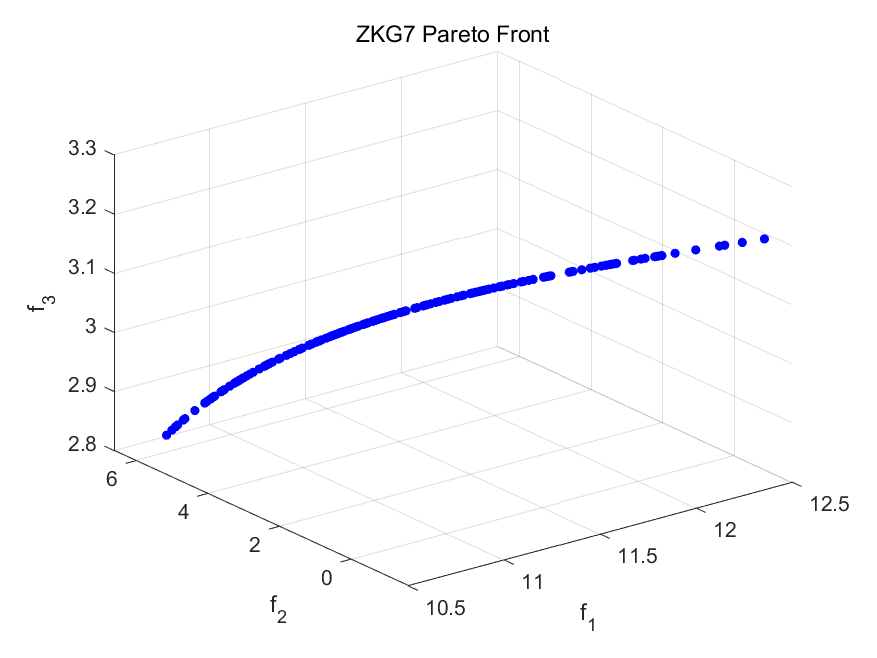}
		\caption{M-BFGS}
		\label{fig11:qnm}
	\end{subfigure}
	\hfill
	\begin{subfigure}[b]{0.3\textwidth}
		\centering
		\includegraphics[width=0.9\textwidth]{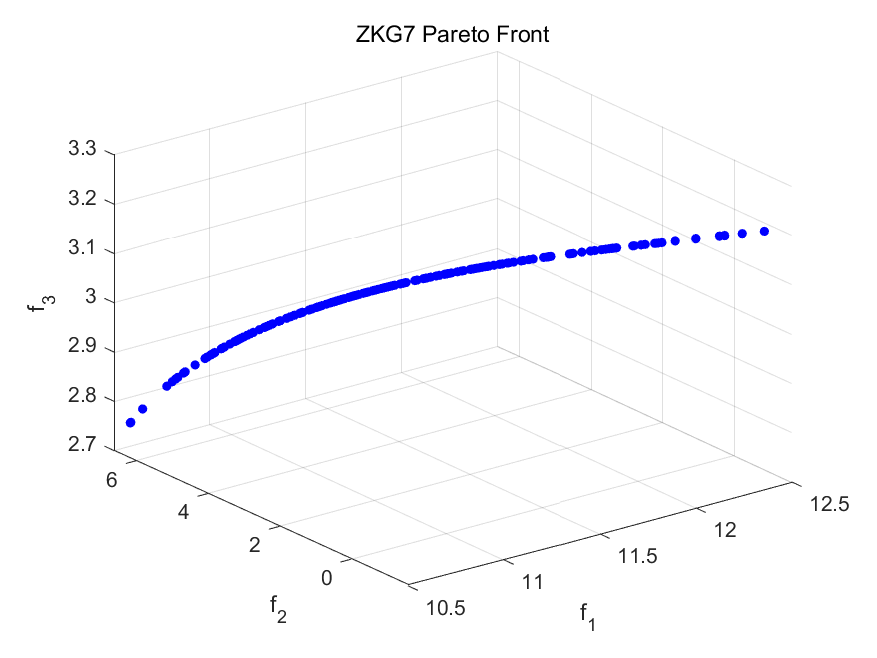}
		\caption{BB-DQN}
		\label{fig11:mbfgs}
	\end{subfigure}
	\hfill
	\begin{subfigure}[b]{0.3\textwidth}
		\centering
		\includegraphics[width=0.9\textwidth]{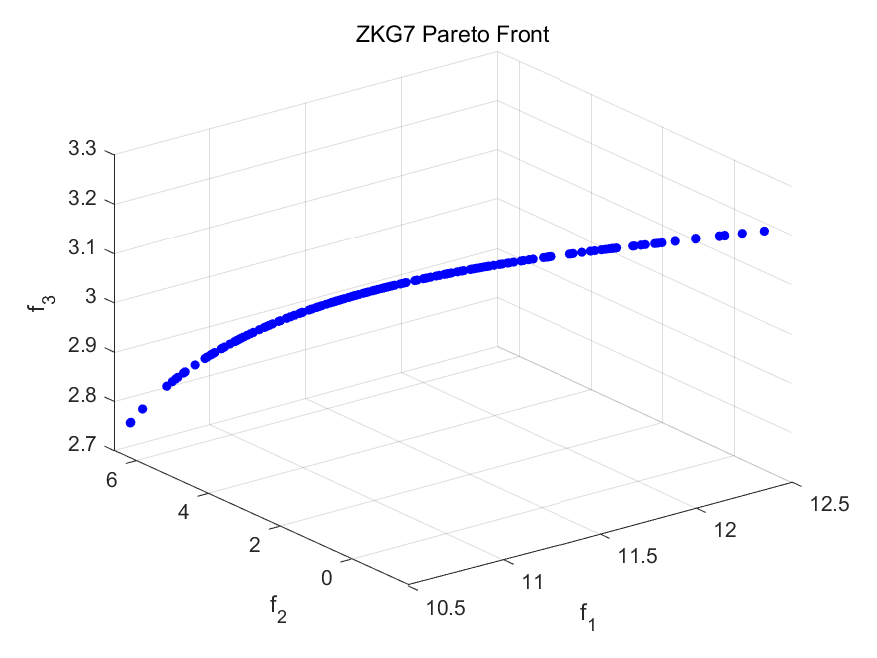}
		\caption{D-QN}
		\label{fig11:DQN}
	\end{subfigure}
	\caption{The approximated nondominated frontiers generated by M-BFGS, BB-DQN and D-QN for the ZKG7 problem}
	\label{fig11:pareto1}
\end{figure}

\begin{figure}[h!]
	\centering
	
	\begin{subfigure}[b]{0.3\textwidth}
		\centering
		\includegraphics[width=0.9\textwidth]{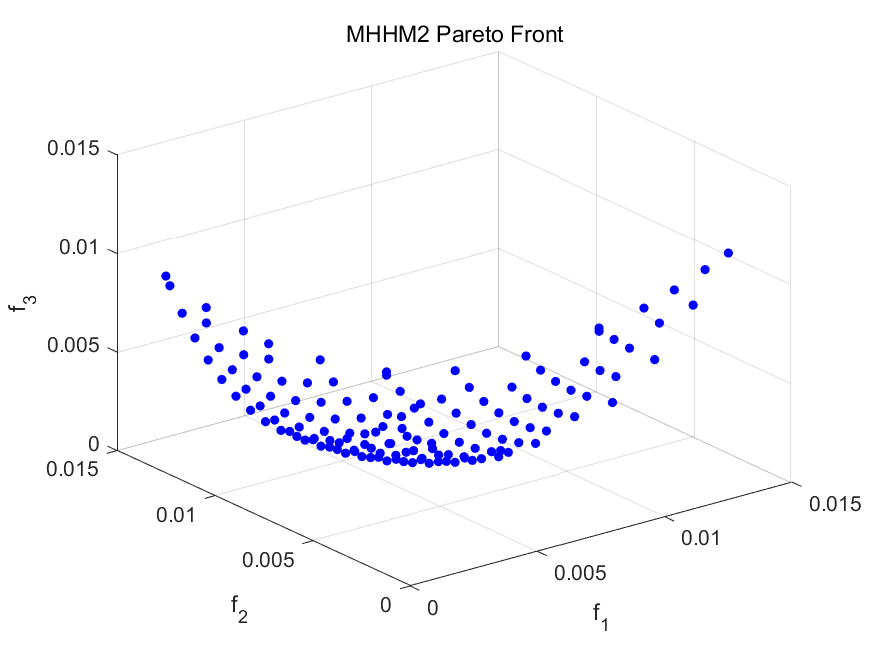}
		\caption{M-BFGS}
		\label{fig12:qnm}
	\end{subfigure}
	\hfill
	\begin{subfigure}[b]{0.3\textwidth}
		\centering
		\includegraphics[width=0.9\textwidth]{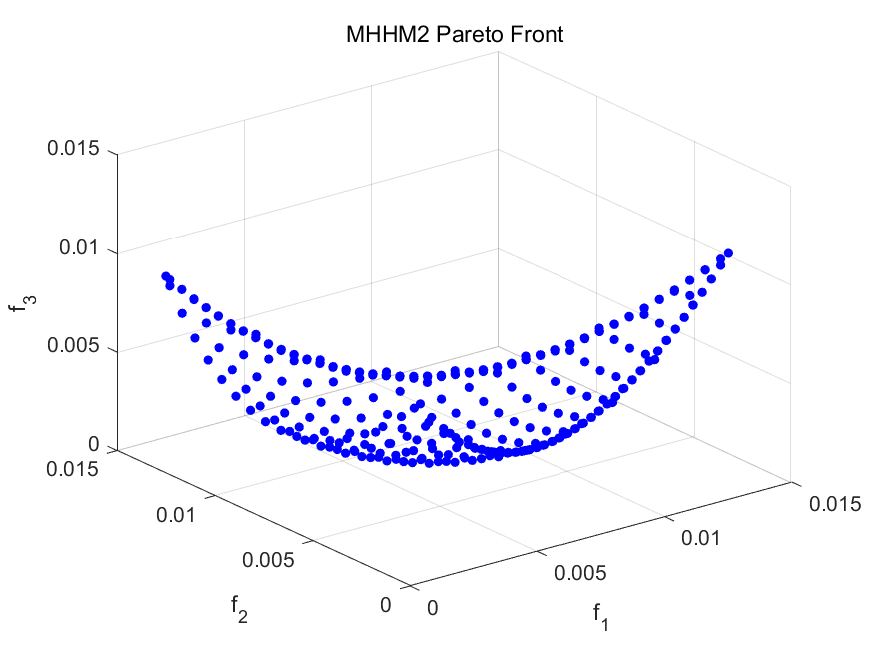}
		\caption{BB-DQN}
		\label{fig12:mbfgs}
	\end{subfigure}
	\hfill
	\begin{subfigure}[b]{0.3\textwidth}
		\centering
		\includegraphics[width=0.9\textwidth]{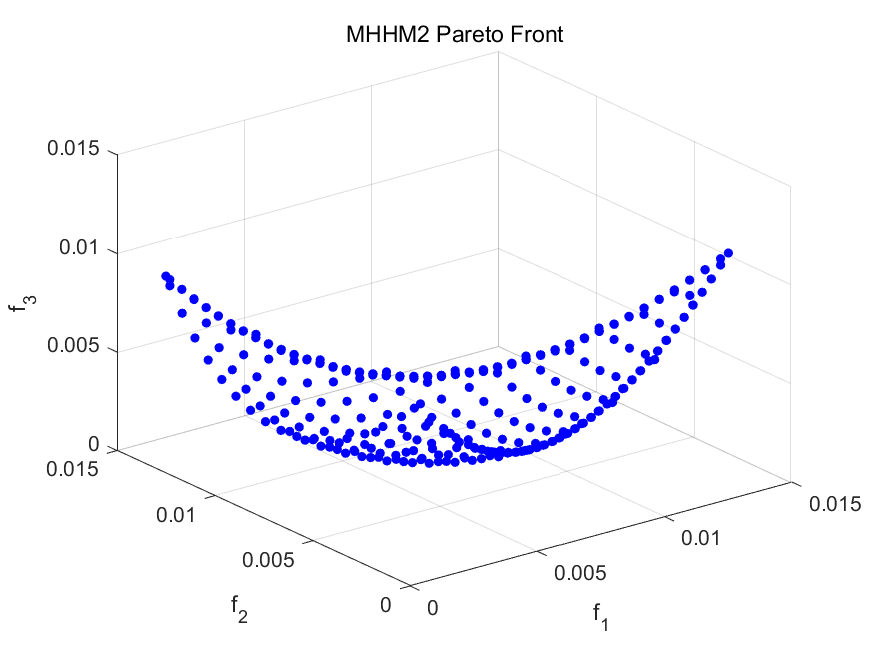}
		\caption{D-QN}
		\label{fig12:DQN}
	\end{subfigure}
	\caption{The approximated nondominated frontiers generated by M-BFGS, BB-DQN and D-QN for the MHHM2 problem with 2500 starting points}
	\label{fig12:pareto1}
\end{figure}

\section{Conclusion and Future Work}

This paper proposes the Barzilai-Borwein Diagonal Quasi-Newton (BB-DQN) method, an efficient quasi-Newton algorithm for solving nonconvex multiobjective optimization problems. The key innovation of the proposed method lies in adopting a unified modified Barzilai-Borwein diagonal matrix to approximate the aggregated Hessian matrix of all objective functions, which significantly reduces both computational and storage costs compared with methods that maintain separate Hessian approximations for each objective. Theoretical analysis verifies the reliability of BB-DQN: the algorithm globally converges to Pareto critical points without convexity assumptions, and its R-linear convergence rate is proved under standard conditions. Extensive numerical experiments demonstrate that BB-DQN achieves higher computational efficiency than the M-BFGS and D-QN methods on various benchmark problems, particularly in large-scale scenarios.

It is worth noting that imbalance is an inherent characteristic of multiobjective optimization problems. Therefore, in future work, we plan to integrate gradient normalization techniques and adaptive gradient scaling strategies based on the curvature differences of individual objective functions into the BB-DQN method. This will effectively address the issue of imbalance and further improve the efficiency of quasi-Newton algorithms for nonconvex multiobjective optimization problems.

\section*{Funding}
This research was supported by
the National Natural Science Foundation of China (Nos. 12171060, 12431010), the Chongqing Natural Science Foundation Project (No. CSTB2024NSCQ-LZX0140),
the Science and Technology Research Program of Chongqing Municipal Education Commission (No. KJZD-M202300504),
the Bayu Scholars Program of Chongqing Municipal Education Commission, and
Bowang Scholars Young Top Talents Cultivation Program of Chongqing Normal University.

\section*{Acknowledgement} These authors contributed equally to this work.

\section*{Disclosure statement}
The authors declare that they have no conflict of interest.


\end{document}